\documentclass[a4paper,11pt]{amsart}
\usepackage[english]{babel}
\usepackage[latin1]{inputenc}
\usepackage{amssymb}
\usepackage{amsmath}
\usepackage{amscd}
\usepackage[colorlinks=true,linkcolor=blue,urlcolor=black]{hyperref}
\usepackage{float}
\usepackage{stmaryrd}

\usepackage{amsfonts}
\usepackage{latexsym}
\usepackage[all]{xy}
\usepackage{graphicx}
\usepackage{pb-diagram}
\usepackage{verbatim}
\usepackage{anysize}
\usepackage{cancel}
\usepackage{color,xcolor}
\usepackage[normalem]{ulem} 

\definecolor{mypurple}{RGB}{128,37,92}
\colorlet{ColorRelleno}{mypurple!50!white}

\usepackage[cal=boondox,calscaled=.96]{mathalfa}

\usepackage{fancyhdr}

\marginsize{2cm}{2cm}{2.5cm}{2.5cm}

\setcounter{secnumdepth}{4} \setcounter{tocdepth}{4}

\def\QQ{\mathbb{Q}}
\def\NN{\mathbb{N}}

\def\lbr{\llbracket}
\def\rbr{\rrbracket}

\newcommand{\N}{\mathbb N}

\newcommand{\Mp}{\mathfrak p}
\newcommand{\mm}{\mathfrak m}
\newcommand{\Ma}{\mathfrak a}

\newcommand{\M}{\mathcal M}
\newcommand{\bfx}{{\bf x}}
\newcommand{\bfb}{{\bf b}}

\newtheorem{teo}{Theorem}[section]
\newtheorem{prop}[teo]{Proposition}
\newtheorem{cor}[teo]{Corollary}
\newtheorem{lem}[teo]{Lemma}
{\theoremstyle{definition}
	\newtheorem{Def}[teo]{Definition}
	\newtheorem{nota}[teo]{Remark}}

\newtheorem{Def-Lem}[teo]{Definition-Lemma}
\newtheorem{parr}[teo]{ }

\newenvironment{proof1}[2]%
{\par\noindent{\it Proof of #1 }{#2}.  
	\nopagebreak\normalsize}%
{\hfill\linebreak[2]\hspace*{\fill}$\square$}

\DeclareMathOperator{\Spec}{\rm Spec}

\DeclareMathOperator{\Specmax}{\rm Specmax}

\DeclareMathOperator{\Leap}{\rm Leaps}

\DeclareMathOperator{\rank}{\rm rank}

\DeclareMathOperator{\End}{\rm End}

\DeclareMathOperator{\IDer}{\rm IDer}

\DeclareMathOperator{\Der}{\rm Der}

\DeclareMathOperator{\HS}{\rm HS}

\DeclareMathOperator{\Id}{\rm Id}

\DeclareMathOperator{\Sing}{\rm Sing}

\DeclareMathOperator{\Min}{\rm Min}

\DeclareMathOperator{\het}{\rm ht}

	\title{Finiteness of Leaps in the sense of Hasse-Schmidt of reduced rings}
\author{A. Bravo, Mar\'ia de la Paz Tirado Hern\'andez}
\thanks{The first author was partially supported by the Grant PID2022-138916NB-I00 funded by MCIN/AEI/10.13039/501100011033 and by ERDF A way of making Europe, and by 
		  the Spanish Ministry of Science and Innovation, through the ``Severo Ochoa'' Programme for Centres of Excellence in R\&D (CEX2019-000904-S).  The second author was partially supported by the Grant PID2020-114613GB-I00 funded by the Spanish Ministry of Science and Innovation and FJC2020-045783-I/AEI/10.13039/501100011033. Both authors were partially supported by 
		    the Madrid Government (Comunidad de Madrid - Spain) under the multiannual Agreement with UAM in the line for the Excellence of the University Research Staff in the context of the V PRICIT (Regional Programme of Research and Technological Innovation) 2022-2024.}

\keywords{Hasse-Schmidt derivation, Integrable derivation, Leap}
\subjclass[2020]{13N15}

\AtEndDocument{\bigskip{\footnotesize
		\noindent\textsc{Depto. Matem\'aticas, Facultad de Ciencias, Universidad Aut\'onoma de Madrid and Instituto de Ciencias Matem\'aticas, ICMAT,  CSIC-UAM-UC3M-UCM, Cantoblanco 28049 Madrid, Spain} \newline
		\textit{E-mail address}, A. Bravo: \texttt{ana.bravo@uam.es}; https://orcid.org/0000-0002-2933-1132
		\medskip
		
		\noindent\textsc{Depto. Matem\'aticas, Facultad de Ciencias, Universidad Aut\'onoma de Madrid, Cantoblanco 28049 Madrid, Spain. Instituto de Matem\'aticas (IMUS), Facultad de Matem\'aticas, Universidad de Sevilla, calle Tarfia s/n, 41012 Sevilla, Spain} \newline
		\textit{E-mail address}, M.P. Tirado Hern\'andez: \texttt{maria.tirado@uam.es}; https://orcid.org/0000-0002-8006-843X	}}

\begin{document}
	
		\begin{abstract}
	 We give sufficent conditions for a derivation of a  $k$-algebra $A$ of finite type  to be $\infty$-integrable in the sense of Hasse-Schmidt, when $A$ is a complete intersection,  or when $A$ is reduced and $k$ is a regular ring. As a consequence, we prove that, if  in addition  $A$ contains a field, then  the set of leaps of $A$ is finite along the minimal primes of   certain Fitting ideal  of $\Omega_{A/k}$. 
	\end{abstract}
	
	\maketitle
	

	\section{Introduction}

\color{black} Let $k$ be a ring and let $A$ be a commutative $k$-algebra   with unit. The $A$-module of derivations $
\Der_k(A)$ relies on    the  infinitesimal structure of $A$, and henceforth encodes information about its singularities if $A$ is Noetherian.  When ${\mathbb   Q}\subset A$,  examples in this line can be found in   the work of  Seidenberg about the extension of derivations to the integral closure of a Noetherian domain,  or the action of $
\Der_k(A)$ on the associate primes  of   $A$ or on the minimal primes of the singular locus of $A$   (when $A$ is of finite type over a field)  (see \cite{Se}, \cite{Se2}); also   in  Zariski's results  on the   completion  of a local ring having a derivation that maps the maximal ideal to the unit ideal (see \cite{Zariski}). Under quite general assumptions, if $k$ is a field, we have that   $\rank 
\Der_k(A)\leq \dim A$  (see \cite{Matsumura}, \cite{Molinelli}).

\medskip

Annoyingly enough,  these  results do not hold, in general, when the characteristic of the ring is positive, the obstruction being that, in this case, the $A$-module $
\Der_k(A)$ is simply too large.  A good alternative  is to consider the $A$-submodule of  
$\Der_k(A)$ that consists of the $\infty$-integrable derivations of $A$, $\IDer_k(A)$:  these are the derivations that can be extended to a  Hasse-Schmidt derivation  of $A$ over $k$ (of lenght $\infty$)\color{black}. The   results    above   have been   shown to hold in     prime   characteristic   when  $\Der_k(A)$ is replaced  by $\IDer_k(A)$  (see \cite{BK}, \cite{mat-intder-I}, \cite{Mo}, \cite{Se}).  We also refer to \cite{NaFor} where the good properties of   $\IDer_k(A)$ in positive characteristic, in contrast to those of $\Der_k(A)$,   are further explored.  

\medskip

 The notion of $\infty$-integrable derivation can be considered as a particular case of the following more general one. 
  For $m\in {\mathbb N}_{\geq 1} \cup \{\infty\}$,  a $k$-derivation $\delta:A\to A$ is said to be {\em $m$-integrable} if it extends up to a Hasse-Schmidt derivation $D=(\Id, D_1=\delta, \ldots, D_m)$ of $A$ over $k$ of length $m$  (see \cite{Na2}).  Analogously,  the set of $m$-integrable $k$-derivations,    $\IDer_k(A;m)$, is an $A$-submodule of $\Der_k(A)$.

\medskip 

Hence, there is a (non-increasing) chain of $A$-modules: 
\begin{equation}
	\label{sucesion_integrab} 
	\Der_k(A)=\IDer_k(A;1) \supseteq \IDer_k(A;2)\supseteq \ldots \supseteq \IDer_k(A;\infty),
\end{equation}
and we say that $A$ has a {\em leap} at $s\geq 2$ if the $(s-1)$-th inclusion of this chain is proper, i.e. if $\IDer_k(A;s-1)/\IDer_k(A;s)\neq 0$.    We will use $\Leap_k(A)$ to denote the set of   leaps of $A$.  If $\QQ\subset k$, or if $k$   is arbitrary but $A$ is $0$-smooth over $k$, then  $\Leap_k(A)=\emptyset$ (see \cite{mat-intder-I}), and hence $\Der_k(A)=\IDer_k(A)$. Thus $\Leap_k(A)$  interests us  when the characteristic is positive and, in particular, but non exclusively, when $A$ is   non-regular.  

\medskip
The prior discussion leads to two natural questions. On the one hand, it would be interesting to have some criterion to decide whether a given derivation $\delta\in \Der_k(A)$  is $\infty$-integrable or not. On the other hand, one may wonder    how the set $\Leap_k(A)$ is,   and whether it tells us something about the singularities of $A$. 

\medskip

In connection with the first question in \cite[Theorem 11]{mat-intder-I}   it was proven that if $A$ is a domain of finite type over a perfect field $k$, then any $k$-derivation which sends $A$ into its Jacobian ideal is $\infty$-integrable.   Furthermore,  in \cite[Proposition 2.2.1]{Na2C} this result was  generalized to the case in which  $k$ is any commutative ring and  $A=k[x_1,\ldots,x_n]/I$,  where $I$ is a principal ideal.   

 \medskip

Suppose we are given $D\in \HS_k(A;m)$. The problem of extending $D$ to some $D'\in \HS_k(A;m+1)$ can be translated into a problem of finding a solution to a suitable linear system of equations over the ring $A$. In \cite[Theorem 11]{mat-intder-I} the fact that $A$ is a domain is used to  treat such  system of equations on $K(A)$, the ring of fractions of $A$, and there, Cramer's rule can be  applied.  Here, we work  in a more general setting, dropping the assumption of $A$ being a domain, or even that of $k$ being a field.   Thus, we need to work    out  some criteria  that guarantees the existence of a solution to a  linear equations over arbitrary (commutative) rings. This leads us to generalize the previous results in  different  ways.  First, with no extra hypotheses on $k$:

\medskip 

\noindent{\bf Theorem   \ref{Teo-DerIntInterseccionCompleta}.}
{\em Let  $k$ be a commutative ring with unit, set      $R=k[x_1,\ldots, x_n]$, let  $I=\langle f_1,\ldots, f_r\rangle$  and set  $A=R/I$. Let  $\delta\in \Der_k(A)$ and let $J_r$ be the $(n-r)$--Fitting ideal  of $\Omega_{A/k}$. If  $\delta(A)\subset J_r$, then  $\delta\in \IDer_k(A)$. In  particular, $J_r\Der_k(A)\subset \IDer_k(A)$. }

\medskip

The next result requires $k$ to be regular, but in exchange, it gives us a sufficient condition for integrability, for instance, in the case of equidimensional reduced $k$-algebras of finite type:

\medskip

\noindent {\bf Theorem   \ref{Teo-CasoNoEquidimensional}.} 
{\em Let $k$ be a regular ring,  set $R=k[x_1,\ldots, x_n]$ and let  $I\subset R$ be a radical ideal with  $r=\max \{\het(P)\ | \ P\in \Min(I)\}$.  Let  $A=R/I$ and  let  $J_r$ be the $(n-r)$--Fitting ideal  of $\Omega_{A/k}$.  Then,   $J_r\Der_k(A)\subset \IDer_k(A)$.}

In fact, we will see that 	Theorems \ref{Teo-DerIntInterseccionCompleta} and  \ref{Teo-CasoNoEquidimensional} also hold after replacing $R=k[x_1,\ldots, x_n]$ by $\widetilde{R}=k\lbr x_1,\ldots, x_n\rbr$ (see Remark     \ref{series_integrabilidad}).

\medskip

  Regarding to the study of the set $\Leap_k(A)$, 
 the second author  has shown  that, for a commutative ring of prime characteristic $p$,   the leaps of $A$   can only occur at powers of $p$ (see \cite{Ti1}). Furthermore,  the set of leaps remains the same under some base changes, such as separable ring extensions over a field of positive characteristic (see \cite{Ti2}). However, algebroid curves with the same semigroup may have different sets of leaps (see \cite{TiPhD}).
 
 \medskip

The appearance of  leaps is a  phenomenon that can only occur on rings of positive characteristic. It 
 would be interesting to explore  further connections with other pathologies  of singularities of varieties in characteristic $p>0$. To start with, it is quite natural to wonder about the stability of sequence (\ref{sucesion_integrab}) at least in the case of algebraic varieties over perfect fields.

 \medskip

 More precisely, for a given set of powers of $p$, $\Lambda$, it is not hard to find examples of rings $A$ for which $\Leap_k(A)=\Lambda$:   if ${\mathbb F}_p\subset k$, and      $A=k[x_1,\ldots, x_n,\ldots]/\langle x_1^{p^{m_1}},\ldots, x_n^{p^{m_n}},\ldots\rangle$, then $\Leap_k(A)=\cup_i\{p^{m_i}\}$. 
However, one might  wonder    whether $\Leap_k(A)$ is finite  assuming some natural finiteness conditions on $A$.  

\medskip

This issue was addressed in   \cite{NRT},  where it was proven that if $A$ is the coordinate ring of an irreducible affine curve over a perfect field $k$ of prime characteristic with geometrically unibranch singularities, then   $\Leap_k(A)$ is finite. Here  we generalize this result. More precisely, we prove: 
 
\medskip

\noindent{\bf Theorem \ref{Teo-AnillosConSaltosFinitos}. }
{\em Let $k$ be a Noetherian ring containing a field, let $R=k[x_1,\ldots, x_n]$ and let  $I\subset R$ be an  ideal. Set  $A=R/I$ and let $\Mp\in \Spec(A)$ be  a minimal prime of  $J_r$, the   $(n-r)$--Fitting ideal of  $\Omega_{A/k}$.  Suppose that, at least,  one of the following conditions hold: 
	\begin{itemize}
		\item[1)] $I=\langle f_1,\ldots, f_r\rangle$. 
		\item[2)] $k$ is regular, $I$ is radical, and  $r=\max\{\het(P) \ | \ P\in \Min(I)\}$. 
	\end{itemize}
	Then the set $\Leap_k(A_{\Mp})$ is finite of cardinal  bounded by $\dim_K (\Der_k(A_\Mp)/{\mathfrak p}^M \Der_k(A_\Mp))$ where $K$ is the residue field of   $\Mp$,  and $M$ is the smallest positive integer so that  $\Mp^M\Der_k(A_\Mp)\subset \IDer_k(A_\Mp)$.  
}

\color{black}

\medskip

As a consequence: 

\medskip 

\noindent{\bf Corollary \ref{corolario_curvas}.} 
	{\em Let   $k$ be  a perfect field and let $A$ be a reduced $k$-algebra of finite type of  dimension 1. Then, $\Leap_k(A)$ is finite. }

\medskip 

 To conclude, regarding the case of formal power series ring we prove: 

\medskip

\noindent{\bf Theorem \ref{Teo-AnillosConSaltosFinitosFormal}.} {\em Let $k$ be a Noetherian ring containing a field, let $\widetilde{R}=k\lbr x_1,\ldots, x_n\rbr$ and let  $I\subset \widetilde{R}$ be an  ideal. Set  $\widetilde{A}=\widetilde{R}/I$  and let 
	$J_r:=J_r(\widetilde{A})$  be  the    $(n-r)$--Fitting ideal of  $\widetilde{\Omega}_{\widetilde{A}/k}$.  Suppose that the radical of $J_r$ is a maximal ideal ${\mathfrak m}\subset \widetilde{A}$ and that, in addition,  at least one of the following conditions hold: 
	\begin{itemize}
		\item[1)] $I=\langle f_1,\ldots, f_r\rangle$; 
		\item[2)] $k$ is regular, $I$ is radical and  $r=\max\{\het(P) \ | \ P\in \Min(I)\}$. 
	\end{itemize}
	Then the set $\Leap_k(\widetilde{A})$ is finite of cardinal bounded by $d:=\dim_K (\Der_k(A)/{\mathfrak m}^M \Der_k(\widetilde{A}))$ where $K$ is the residue field of   ${\mathfrak m}$,  and $M$ is the smallest positive integer so that  ${\mathfrak m}^M\Der_k(\widetilde{A})\subset \IDer_k(\widetilde{A})$.}

\color{black}

\medskip

The paper is organized as follows: in section 2 we recall the definitions of Hasse-Schmidt derivations and integrability,  and review some basic results. In section 3, we give a sufficient condition for  a local Noetherian $k$-algebra containing a field to have a finite number of leaps. In section 4 we study the existence of suitable sets of   {\em generic generators}  of a radical ideal, $I\subset R=k[x_1,\ldots, x_n]$, 
 in the presence of some  non-vanishing conditions of the  Fitting ideals of $\Omega_{A/k}$, for  $A=R/I$.  This result will be used  in   section 5, where we prove two of the main results of this paper, Theorems \ref{Teo-DerIntInterseccionCompleta} and \ref{Teo-CasoNoEquidimensional}. In    section 6, we address Theorem \ref{Teo-AnillosConSaltosFinitos}. Throughout the  paper, all rings (and algebras) are assumed to be
commutative   with unit.

\medskip
 
{\em Acknowledgments:} The authors profited from conversations with
  L. Narv\'aez Macarro. 
  
	\section{Hasse--Schmidt derivations}
	
	In this section, we recall the main definitions of the theory of
	Hasse-Schmidt derivations and review some basic results. From now on, $A$ will denote a commutative $k$-algebra with unit. We denote
	$\overline{\mathbb N}:=\mathbb N \cup \{\infty\}$ and, for each
	integer $m\geq 1$, we will write $A\lbr t\rbr_m:=A\lbr t\rbr/\langle
	t^{m+1}\rangle$ and $A\lbr t\rbr_\infty:=A\lbr t\rbr$. General
	references for the definitions and results in this section are
	\cite[\S 27]{Ma} and \cite{Na2}.

	\begin{Def}\label{DefHS}
 A {\em Hasse-Schmidt derivation or  a   HS-derivation\footnote{The HS-derivations are also called {\em higher derivations}, see \cite[\S 27]{Ma}.} of $A$ over
		$k$  of length $m\geq 1$} (resp. of length $\infty$) is a sequence \color{black}
		$D:=(D_0,D_1,\ldots, D_m)$ (resp. $D=(D_0,D_1,\ldots)$) of
		$k$-linear maps $D_\alpha:A\rightarrow A$, satisfying the conditions:
		$$
		\begin{array}{ccc}
			D_0=\Id_A,&\displaystyle D_\alpha(xy)=\sum_{i+j=\alpha} D_i(x)D_j(y),
		\end{array}
		$$
		for all $x,y\in A$ and for all $\alpha$.  We write $\HS_k(A;m)$ (resp.
		$\HS_k(A;\infty)=\HS_k(A)$) for the set of HS-derivations of $A$ (over
		$k$) of length $m$ (resp. $\infty$).

	\end{Def}
	
	The notion of HS-derivations (of length $\infty$) was introduced in \cite{H-S}. Any  HS-derivation $D$ of $A$ over $k$ can be interpreted as a power series $\sum_{\alpha=0}^m D_\alpha t^\alpha \in \End_k(A)\lbr t\rbr_m$. Actually, $\HS_k(A;m)$ is a (multiplicative) sub-group   of $\mathcal U(\End_k(A)\lbr t\rbr_m)$, the group of units of $\End_k(A)\lbr t\rbr_m$. The group operation in $\HS_k(A;m)$ is explicitly given by	$
	(D\circ D')_\alpha=\sum_{i+j=\alpha} D_i\circ D_j'$, and the identity element of $\HS_k(A;m)$ is $\mathbb{I}$ with $\mathbb I_0=\Id$ and $\mathbb I_\alpha=0$ for all $\alpha=1,\ldots, m$. In addition, any HS-derivation $D\in \HS_k(A;m)$ determines and is determined by
	the $k$-algebra homomorphism
	$$
	\varphi_D: a\in A \longmapsto a+\sum_{\alpha\geq 1}^m D_\alpha(a)t^\alpha \in
	A\lbr t \rbr_m.
	$$

	Given positive integers  $1\leq n \leq m$, there is a  group   homomorphism $\tau_{m,n}: \HS_k(A;m) \to \HS_k(A;n)$ corresponding to the obvious truncation  map. 
	We have the following identity of groups
	\begin{equation} \label{eq:limit-finite}
		\HS_k(A;\infty)
		= \lim_{\stackrel{\longleftarrow}{\substack{m}}} \HS(A;m).
	\end{equation}
	
	\medskip
 
	If $J$ is an ideal of $A$, a $k$-derivation $\delta:A\to A$ is called {\em $J$-logarithmic} if $\delta(J) \subset J$. The set of
	$J$-logarithmic $k$-derivations is an $A$-submodule of $\Der_k(A)$
	and  will be denoted by $\Der_k(\log J)$. Analogously, we define $J$-logarithmic HS-derivations of length $m$, which is a subgroup of $\HS_k(A;m)$. Namely, 
	\begin{Def}\label{Def-HSLogaritmica} Let $m\in \overline \NN$.
	We say that $D\in \HS_k(A;m)$ is {\em $J$-logarithmic} if $D_\alpha(J)\subseteq J$ for all $\alpha=0,\ldots, m$. The group of $J$-logarithmic HS-derivations of length $m$ is denoted by $\HS_k(\log J;m)$ and $\HS_k(\log J):=\HS_k(\log J;\infty)$.
	\end{Def}
	
	If $A$ is a finitely generated $k$-algebra, we may assume that $A$
	is the quotient of $R=k[x_1,\dots,x_n]$ by some ideal $J$. For every $m\in \overline \N$, there is a surjective group homomorphim:
	$$
	\HS_k(\log J;m) \longrightarrow \HS_k(A;m), 
	$$
	given by: 
	\begin{equation*}\label{delta barra}
		D\in \HS_k(\log J;m) \mapsto \overline{D} \in \HS_k(A;m)\mbox{ with }\overline{D_\alpha}(r+J) = D_\alpha(r) +J \mbox{ for all }r\in R,\ \alpha=0,\ldots, m.
	\end{equation*}

	\medskip

	\begin{Def}\label{Log-Int} Let $m\in \overline \N$ and $\delta:A\to A$ be a $k$-derivation. We say that $\delta$ is {\em $m$-integrable} (over $k$) if there is a  HS-derivation $D\in \HS_k(A; m)$ such that $D_1=\delta$.
	Any such $D$
	will be said to be an {\em $m$-integral} of $\delta$. The set of $m$-integrable $k$-derivations of $A$ is denoted by $\IDer_k(A; m)$ and $\IDer_k(A):=\IDer_k(A;\infty)$. 
	
	Let $J$ be an ideal of $A$. We say that $\delta$ is {\em $J$-logarithmically $m$-integrable} if there exists $D\in \HS_k(\log J;m)$ such that $D$ is an $m$-integral of
	$\delta$. We denote $\IDer_k(\log J;m)$ the set of $J$-logarithmically
	$m$-integrable derivations and $\IDer_k(\log
	J):=\IDer_k(\log J; \infty)$.
	\end{Def}
	
The sets $\IDer_k(A;m)$ and $\IDer_k(\log J;m)$ are $A$-submodules of $\Der_k(A)$. We have the following chains:
\begin{equation}
	\label{chain1}
\Der_k(A)=\IDer_k(A;1)\supseteq \IDer_k(A;2)\supseteq \ldots \supseteq \IDer_k(A),
\end{equation}
and 
\begin{equation}
	\label{chain2}
\Der_k(\log J)=\IDer_k(\log J;1)\supseteq \IDer_k(\log J;2)\supseteq \ldots \supseteq \IDer_k(\log J).  
\end{equation}

If $A$ is a finitely generated $k$-algebra, i.e., if $A=k[x_1,\ldots, x_n]/J$, there exist  surjective maps of $A$-modules: 
$$
\IDer_k(\log J;m)\ni\delta\longrightarrow \overline\delta\in \IDer_k(A;m) \ \  \mbox{where $\overline \delta(r+J)=\delta(r)+J$, \ } \forall m\in \overline\NN.
$$

\smallskip

If $\QQ\subset k$, then any derivation of $A$ is $\infty$-integrable,   and hence  $\Der_k(A)=\IDer_k(A)$ (see \cite[p.230]{mat-intder-I}). However, some of the  containments in sequence  (\ref{chain1})  could be strict   when the characteristic is positive (see \cite[Examples 1 to 3]{mat-intder-I}) and, in this case we say that $A$ has a {\em leap}. More precisely:

	\begin{Def}\label{leap} Let $s>1$ be an
		integer. We say that the $k$-algebra $A$ has a {\em leap} at $s$ if the
		inclusion $\IDer_k(A;s-1)\supsetneq \IDer_k(A;s)$ is proper. The set
		of leaps of $A$ over $k$ is denoted by $\Leap_k(A)$. 
		
		In addition, we say that a $k$-derivation {\em leaps} at $s\in \N_{\geq 2}$ if it is $(s-1)$-integrable but not $s$-integrable, i.e., if 
		$$\delta\in \IDer_k(A;s-1)\setminus \IDer_k(A;s).$$
		We say that a subset $\mathcal C\subset \Der_k(A)$ {\em produces a leap} at $s$ if there is some  $\delta\in \mathcal C$ leaping at $s$.
	\end{Def}

If $A$ is $0$-smooth over $k$, then any $k$-derivation is $\infty$-integrable (see \cite[Theorem 27.1]{Ma}), so $\IDer_k(A)=\Der_k(A)$. Thanks to this theorem we have the following result:
\begin{prop}\cite[Proposition 2.2.]{NRT}\label{Prop-SaltosRegular}
Let $k$ be a perfect field. Assume that $A$ is essentially of finite type over $k$. If $A$ is regular, then $\Der_k(A)=\IDer_k(A)$. In particular, $\Leap_k(A)=\emptyset$. 
\end{prop}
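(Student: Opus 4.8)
The plan is to reduce everything to the general fact recalled just above the statement: if $A$ is $0$-smooth over $k$, then every $k$-derivation of $A$ is $\infty$-integrable by \cite[Theorem 27.1]{Ma}, so that $\Der_k(A)=\IDer_k(A)$. Granting this, all the inclusions in the chain (\ref{chain1}) become equalities, and hence $\Leap_k(A)=\emptyset$ by Definition \ref{leap}. Thus the entire task is to show that a regular $k$-algebra essentially of finite type over a perfect field is $0$-smooth over $k$.

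First I would pass to the local case. Since both $\Der_k(-)$ and $\IDer_k(-)$ commute with localization, the equality $\Der_k(A)=\IDer_k(A)$ may be verified after localizing at every prime $\Mp\in\Spec(A)$; equivalently, it suffices to prove that each localization $A_\Mp$ --- again regular and essentially of finite type over $k$ --- is $0$-smooth over $k$. So from now on I assume that $A$ is a Noetherian regular local $k$-algebra, essentially of finite type over $k$, with residue field $K$.

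The key step is the characterization of $0$-smoothness over a field for local algebras essentially of finite type. By \cite[Theorem 28.7]{Ma}, such an $A$ is $0$-smooth over $k$ if and only if it is geometrically regular over $k$, that is, $A\otimes_k k'$ is (Noetherian and) regular for every finite field extension $k'/k$. This is exactly where perfectness of $k$ is used: every finite extension $k'/k$ is then separable, so $A\otimes_k k'$ is a finite \'etale, and therefore regular, $A$-algebra. Consequently geometric regularity reduces to plain regularity, which holds by hypothesis, and $A$ is $0$-smooth over $k$. Combined with the first step, this yields $\Der_k(A)=\IDer_k(A)$ and $\Leap_k(A)=\emptyset$.

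The crux --- and the step I expect to require the most care --- is the equivalence between $0$-smoothness and geometric regularity, together with the essential role of perfectness. Perfectness cannot be dropped: over an imperfect field there are regular local rings that are not $0$-smooth (for instance the purely inseparable extension $\FF_p(t^{1/p})$ regarded as an algebra over $k=\FF_p(t)$), and such rings genuinely exhibit leaps; this is precisely why $\Leap_k(A)$ becomes an interesting invariant only in positive characteristic. I would also keep a careful eye on two routine but necessary points: that \cite[Theorem 28.7]{Ma} is invoked under the hypothesis ``essentially of finite type'', and that the localizations carried out in the first step preserve this hypothesis as well as regularity.
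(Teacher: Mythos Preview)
Your proposal is correct and follows exactly the route the paper signals: the paper does not itself prove this proposition (it is quoted from \cite{NRT}), but the sentence immediately preceding it indicates that the argument goes via \cite[Theorem~27.1]{Ma} on $0$-smoothness, which is precisely what you do by reducing to geometric regularity over a perfect field. One minor point to tighten: the claim that $\IDer_k(-;\infty)$ commutes with localization is less immediate than the corresponding fact for $\Der_k$ or for $\IDer_k(-;m)$ with finite $m$; you can sidestep it entirely by observing that once each $A_\Mp$ is $0$-smooth, $A$ itself is $0$-smooth (smoothness being a local property for algebras essentially of finite type over a field), and then applying \cite[Theorem~27.1]{Ma} directly to $A$ without ever localizing $\IDer_k$.
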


Hence, using \cite[Proposition 1.6]{NRT} it follows that:  
\begin{prop}\label{Prop-UnionSaltos}
Let  $k$ be a perfect field  and $A$ a finitely generated $k$-algebra. Then, 
$$
\Leap_k(A)=\bigcup_{P\in \Sing(A)} \Leap_k(A_{P}),
$$
where $\Sing(A)$ is the singular locus of $A$. 
\end{prop}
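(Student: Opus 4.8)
The plan is to reduce the statement to its local counterpart through the good behaviour of integrable derivations under localization, and then to erase the regular points with the help of Proposition~\ref{Prop-SaltosRegular}. The key input is the localization compatibility \cite[Proposition 1.6]{NRT}: since $A$ is a finitely generated $k$-algebra, for every $m\in\overline{\NN}$ and every $P\in\Spec(A)$ one has $\IDer_k(A;m)_P=\IDer_k(A_P;m)$ as $A_P$-submodules of $\Der_k(A)_P=\Der_k(A_P)$. Because localization is exact, applying this to two consecutive lengths gives, for each integer $s\geq 2$, a canonical isomorphism of $A_P$-modules
\[
\bigl(\IDer_k(A;s-1)/\IDer_k(A;s)\bigr)_P \;\cong\; \IDer_k(A_P;s-1)/\IDer_k(A_P;s).
\]

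Next I would run a standard support argument. As $k$ is a field, $A$ is Noetherian and $\Der_k(A)=\Hom_A(\Omega_{A/k},A)$ is a finitely generated $A$-module; hence each $N_s:=\IDer_k(A;s-1)/\IDer_k(A;s)$ is a finitely generated $A$-module. Such a module is zero if and only if all of its localizations at prime (equivalently, maximal) ideals are zero. Therefore, for $s\geq 2$, we have $s\in\Leap_k(A)$, i.e.\ $N_s\neq 0$, if and only if $(N_s)_P\neq 0$ for some $P\in\Spec(A)$, which by the displayed isomorphism means $\IDer_k(A_P;s-1)\supsetneq\IDer_k(A_P;s)$, i.e.\ $s\in\Leap_k(A_P)$. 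Ranging over all $s\geq 2$, this yields $\Leap_k(A)=\bigcup_{P\in\Spec(A)}\Leap_k(A_P)$.

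Finally, I would discard the regular points. If $P\notin\Sing(A)$, then $A_P$ is a regular local ring that is essentially of finite type over the perfect field $k$, so Proposition~\ref{Prop-SaltosRegular} gives $\Der_k(A_P)=\IDer_k(A_P)$ and hence $\Leap_k(A_P)=\emptyset$. Thus the primes lying outside $\Sing(A)$ contribute nothing to the union of the previous paragraph, and we obtain $\Leap_k(A)=\bigcup_{P\in\Sing(A)}\Leap_k(A_P)$, as claimed. The only genuinely nontrivial ingredient is the localization statement \cite[Proposition 1.6]{NRT} (whose proof amounts to a patching argument for the obstruction to prolonging an $m$-integral, and which is where the finiteness hypotheses on $A$ are used); once that is in hand, the remaining steps are the routine reductions above, the sole points needing attention being the finite generation of the modules $N_s$ over the Noetherian ring $A$ and the verification that $A_P$ for $P\notin\Sing(A)$ satisfies the hypotheses of Proposition~\ref{Prop-SaltosRegular}.
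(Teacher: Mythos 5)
Your argument is correct and is essentially the one the paper intends: the paper derives this proposition in one line from \cite[Proposition 1.6]{NRT} together with Proposition~\ref{Prop-SaltosRegular}, and your write-up simply makes explicit the localization-plus-support reduction and the removal of regular points that this one-liner relies on. The auxiliary observations you flag (finite generation of the quotients $N_s$ over the Noetherian ring $A$, and that $A_P$ for $P\notin\Sing(A)$ meets the hypotheses of Proposition~\ref{Prop-SaltosRegular}) are both valid.
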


The next lemmas  will be also used throughout this paper.

\begin{lem}\label{Lem-HSInterseccionIdeales}
	Let $k$ be a commutative ring and $A$ a commutative  $k$-algebra. Let $J_1,\ldots, J_n$ be ideals of $A$ and $m\in \overline\NN$. Then, 
	$$
	\bigcap_{i=1}^n\HS_k(\log J_i;m)\subset \HS_k\left(\log \bigcap_{i=1}^n J_i; m\right).
	$$
\end{lem}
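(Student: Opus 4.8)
The goal is to show $\bigcap_{i=1}^n \HS_k(\log J_i; m) \subset \HS_k(\log \bigcap_{i=1}^n J_i; m)$. Let me set $J := \bigcap_{i=1}^n J_i$ and take $D = (D_0, D_1, \ldots)$ in the left-hand side. By definition I need to verify that $D_\alpha(J) \subseteq J$ for every $\alpha$. Fix $\alpha$ and fix $x \in J = \bigcap_i J_i$. Since $x \in J_i$ for each $i$, and $D \in \HS_k(\log J_i; m)$ for each $i$, I get $D_\alpha(x) \in J_i$ for each $i$; hence $D_\alpha(x) \in \bigcap_i J_i = J$. That is literally all there is to it — the statement is essentially a tautology once the definitions are unwound, since logarithmicity is checked elementwise on ideal membership and intersections of ideals are characterized by membership in each factor.

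Let me double check there's no subtlety I'm missing. The condition "$D \in \HS_k(\log J_i;m)$" means $D \in \HS_k(A;m)$ together with $D_\alpha(J_i) \subseteq J_i$ for all $\alpha = 0, \ldots, m$. An element of $\bigcap_i \HS_k(\log J_i;m)$ is a single HS-derivation $D$ of $A$ over $k$ lying in every $\HS_k(\log J_i;m)$; in particular it is a genuine HS-derivation, so the only thing to check for membership in $\HS_k(\log J;m)$ is the logarithmic condition with respect to $J$. The argument above does exactly that. One should perhaps note explicitly that $D$ belongs to $\HS_k(A;m)$ — this is immediate since each $\HS_k(\log J_i;m)$ is by definition a subgroup of $\HS_k(A;m)$, so their intersection is too.

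**The main (non-)obstacle.** There is no real obstacle here; the lemma is a formal consequence of the definitions and is included as a bookkeeping tool for later sections (it will presumably be applied to decompose a radical ideal $I$ into its minimal primes and combine logarithmic HS-derivations for each component into one for $I$). The only thing to be careful about in the write-up is to phrase the elementwise chase cleanly: pick $\alpha \in \{0,1,\ldots,m\}$, pick $x \in J$, observe $x \in J_i$ for all $i$, apply the hypothesis $D_\alpha(J_i) \subseteq J_i$ to conclude $D_\alpha(x) \in J_i$ for all $i$, and finally $D_\alpha(x) \in \bigcap_i J_i = J$. Since $\alpha$ and $x$ were arbitrary, $D_\alpha(J) \subseteq J$ for all $\alpha$, i.e. $D \in \HS_k(\log J;m)$, which is what we wanted.
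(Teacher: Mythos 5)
Your proof is correct, and the elementwise chase you give is exactly the intended argument: the paper states this lemma without proof precisely because it is immediate from the definition of a $J$-logarithmic HS-derivation. Nothing is missing, and your remark that the intersection of subgroups of $\HS_k(A;m)$ is again a subgroup correctly disposes of the only point one might otherwise overlook.
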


\begin{lem}\cite[Lemma 3.7]{NRT}\label{Lem-AsociadosDerivaciones} Let $k$ be a ring, $A$ a reduced Noetherian $k$-algebra and $P \in \Min(A)$. Then, for every $n \in \NN$, 
every $D \in  \HS_k(A; m)$ induces a HS-derivation in $\HS_k(A/P; m)$, i.e., $D\in \HS_k(\log P;m)$
	\end{lem}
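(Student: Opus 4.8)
The plan is to prove the stronger and cleaner assertion stated at the end, namely that $D$ is $P$-logarithmic: $D_\alpha(P)\subseteq P$ for every $\alpha=0,1,\ldots,m$. Once this is in hand, each $D_\alpha$ descends to a well-defined $k$-linear map $\overline{D_\alpha}\colon A/P\to A/P$, and the Leibniz conditions of Definition \ref{DefHS} pass to the quotient, so that $\overline D=(\overline{D_0},\overline{D_1},\ldots)\in\HS_k(A/P;m)$, which is the conclusion. The one structural fact I want to exploit is that a minimal prime of a reduced Noetherian ring is ``isolated'' in a strong sense: there is a single element outside $P$ that annihilates all of $P$.

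First I would produce that annihilator. Since $A$ is Noetherian, $\Min(A)$ is finite, and since $A$ is reduced, $\bigcap_{Q\in\Min(A)}Q=(0)$. I claim $\bigcap_{Q\in\Min(A),\,Q\neq P}Q\not\subseteq P$: otherwise, $P$ being prime would force some $Q\subseteq P$, contradicting the incomparability of distinct minimal primes (and if $P$ is the unique minimal prime then $P=(0)$, where the statement is trivial). Hence I may choose $s\in\bigcap_{Q\neq P}Q$ with $s\notin P$. For any $x\in P$ the product $sx$ lies in $P\cap\bigcap_{Q\neq P}Q=\bigcap_{Q\in\Min(A)}Q=(0)$, so $sx=0$. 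Thus one fixed $s\in A\setminus P$ satisfies $sP=0$.

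Next I would run an induction on $\alpha$ to show $D_\alpha(x)\in P$ for every $x\in P$. The case $\alpha=0$ is immediate since $D_0=\Id$. For $\alpha\geq 1$, apply $D_\alpha$ to the relation $sx=0$ and use the Leibniz rule:
$$
0=D_\alpha(sx)=\sum_{i+j=\alpha}D_i(s)D_j(x)=s\,D_\alpha(x)+\sum_{j=0}^{\alpha-1}D_{\alpha-j}(s)D_j(x).
$$
By the inductive hypothesis each $D_j(x)$ with $j<\alpha$ lies in $P$, so the right-hand sum lies in the ideal $P$, whence $s\,D_\alpha(x)\in P$. Since $P$ is prime and $s\notin P$, this forces $D_\alpha(x)\in P$, completing the induction and showing $D\in\HS_k(\log P;m)$.

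The argument is short, and the only substantive step is the construction of the uniform annihilator $s$; this is precisely where both reducedness (to get $\bigcap_{Q}Q=(0)$) and the minimality of $P$ (to secure $s\notin P$ via prime avoidance) are used, and it is what would fail for a non-minimal prime. Everything after that is the routine Leibniz induction above. I would finally remark that the conclusion $D\in\HS_k(\log P;m)$ holds uniformly for the given length $m\in\overline{\NN}$, including $m=\infty$ through the inverse limit identification (\ref{eq:limit-finite}).
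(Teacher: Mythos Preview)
Your proof is correct. The paper does not actually supply a proof of this lemma; it merely states it with a citation to \cite[Lemma 3.7]{NRT}. Your argument---producing a uniform annihilator $s\in\bigl(\bigcap_{Q\neq P}Q\bigr)\setminus P$ from reducedness and minimality, then running the Leibniz induction on $\alpha$---is the standard one and is exactly what one finds in the cited source.
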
 
	
The next lemma is consequence of \cite[(1.2.10) and Corollary 2.3.5]{Na2}:

\begin{lem}\label{Cor-SJcDer infinito-integrable1}
Let $k$ be a ring and $A$ a finitely presented Noetherian $k$-algebra. Let $J$ an ideal of $A$ such that  $J\Der_k(A)\subset \IDer_k(A)$ and let  $P$ a minimal prime of $J$. Then
$P^N\Der_k(A_P)\subset \IDer_k(A_P)$ where  $N$ is the minimal integer such that  $P^N\subset JA_P$. 
\end{lem}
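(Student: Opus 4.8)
The plan is to localize the hypothesis $J\Der_k(A)\subseteq\IDer_k(A)$ at $P$ and then use that $P$ is a minimal prime of $J$ to replace $JA_P$ by a power of the maximal ideal $PA_P$. Two compatibility facts are needed. Since $A$ is finitely presented over $k$, the module $\Omega_{A/k}$ is finitely presented, so $\Der_k(A)=\Hom_A(\Omega_{A/k},A)$ commutes with localization; in particular $\Der_k(A_P)=(\Der_k(A))_P$. For integrable derivations I would invoke \cite[(1.2.10) and Corollary 2.3.5]{Na2}: any $D\in\HS_k(A;m)$ extends uniquely to a HS-derivation of $A_P$ over $k$, because $\varphi_D$ sends each $s\in A\setminus P$ to an element of $A_P\lbr t\rbr_m$ whose constant term $s/1$ is a unit, hence to a unit, so the universal property of localization provides the extension; the case $m=\infty$ follows from the finite ones together with \eqref{eq:limit-finite}. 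In particular, for $\delta\in\IDer_k(A)$ the image of $\delta$ in $\Der_k(A_P)=(\Der_k(A))_P$ lies in $\IDer_k(A_P)$, and therefore $(\IDer_k(A))_P\subseteq\IDer_k(A_P)$.

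Given these, the argument is short. Localizing $J\Der_k(A)\subseteq\IDer_k(A)$ at $P$ and using the two compatibilities,
$$
JA_P\cdot\Der_k(A_P)=(J\Der_k(A))_P\subseteq(\IDer_k(A))_P\subseteq\IDer_k(A_P).
$$
Now $P$ is a minimal prime of $J$, so in the Noetherian local ring $(A_P,PA_P)$ the radical of $JA_P$ is the maximal ideal $PA_P$; hence there is a least integer $N\geq 1$ with $(PA_P)^N\subseteq JA_P$. Consequently
$$
P^N\Der_k(A_P)=(PA_P)^N\Der_k(A_P)\subseteq JA_P\cdot\Der_k(A_P)\subseteq\IDer_k(A_P),
$$
which is the assertion.

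The only non-formal input is the localization behaviour of $\IDer_k$ recorded above, which is exactly what \cite[(1.2.10) and Corollary 2.3.5]{Na2} supply; the remaining ingredients are the standard facts that $\Der_k$ commutes with localization for finitely presented algebras and that an ideal with finitely generated radical contains a power of that radical (applied inside $A_P$). I therefore do not expect a real obstacle here: the content of the lemma is simply that the global relation $J\Der_k(A)\subseteq\IDer_k(A)$ is inherited, after localizing at a minimal prime $P$ of $J$, by the powers of $P$, and this is forced by exactness of localization together with the good behaviour of HS-derivations under it.
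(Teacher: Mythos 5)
Your argument is correct and is exactly the deduction the paper intends: the paper gives no written proof, stating only that the lemma is a consequence of \cite[(1.2.10) and Corollary 2.3.5]{Na2}, and your proposal supplies precisely the missing details (localization of $\Der_k$ for finitely presented algebras, the containment $(\IDer_k(A))_P\subseteq\IDer_k(A_P)$ from those references, and $\sqrt{JA_P}=PA_P$ since $P$ is minimal over $J$).
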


\section{A sufficient condition for $\Leap_k(A)$ to be finite }\label{SEC-FinitudSaltosLocales}

The purpose of this section is to give a proof of the following theorem:

\begin{teo}\label{Prop-Saltos finitos}  
	Let  $A$ be a   Noetherian $k$-algebra containing a field.  Suppose that  there is some maximal ideal ${\mathfrak m}\subset A$ and some integer $N\geq 1$ such that  
	\begin{equation}\label{condicionBIS}
		\mm^N\Der_k(A)\subset \IDer_k(A).
	\end{equation}  
 Let $K=A/\mm$. 	Then, 
	$$\#\Leap_k(A) \leq \dim_K(\Der_k(A)/\mm^N\Der_k(A)).$$
	In particular, if $\Der_k(A)$ is a finitely generated $A$-module satisfying condition (\ref{condicionBIS}), then $\Leap_k(A)$ is finite.
\end{teo}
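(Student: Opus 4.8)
The statement bounds the number of leaps of $A$ by the $K$-dimension of the finite-dimensional vector space $V:=\Der_k(A)/\mm^N\Der_k(A)$. The key observation is that, away from the $N$-th step, the chain (\ref{chain1}) is eventually constant, so every leap $s$ must satisfy $s\le N$ (because $\IDer_k(A;N)\supseteq \mm^N\Der_k(A)$ alone is \emph{not} enough — we also need $\IDer_k(A;N)$ to already stabilise; more precisely, I expect to argue that once $\mm^N\Der_k(A)\subset\IDer_k(A)$, the quotients $\IDer_k(A;s)/\IDer_k(A;s+1)$ are all killed by $\mm^N$, hence are $K$-vector spaces, and the whole descending filtration of $\Der_k(A)$ by the $\IDer_k(A;s)$ induces a descending filtration of the finite-dimensional space $V$). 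First I would reduce everything modulo $\mm^N\Der_k(A)$: set $\overline{M}_s:=\big(\IDer_k(A;s)+\mm^N\Der_k(A)\big)/\mm^N\Der_k(A)\subseteq V$. This gives a non-increasing chain $V=\overline{M}_1\supseteq\overline{M}_2\supseteq\cdots$ of $K$-subspaces of the finite-dimensional space $V$.

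The heart of the argument is to show that $A$ leaps at $s$ if and only if $\overline{M}_{s-1}\supsetneq \overline{M}_s$, i.e. that passing to the quotient by $\mm^N\Der_k(A)$ does not collapse any leap. The inclusion $\mm^N\Der_k(A)\subset\IDer_k(A)=\IDer_k(A;\infty)\subseteq\IDer_k(A;s)$ for every $s$ shows that $\IDer_k(A;s)=\IDer_k(A;s)+\mm^N\Der_k(A)$, so in fact $\overline{M}_s$ is literally the image of $\IDer_k(A;s)$ and $\IDer_k(A;s-1)/\IDer_k(A;s)\;\cong\;\overline{M}_{s-1}/\overline{M}_s$ as $A$-modules. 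Hence $\IDer_k(A;s-1)/\IDer_k(A;s)\ne 0$ exactly when $\overline{M}_{s-1}\ne\overline{M}_s$. This is the step I expect to be the main (if modest) obstacle: checking carefully that $\mm^N\Der_k(A)$ sits inside \emph{every} term of the chain, and that the quotient identification is clean — it uses only that $\IDer_k(A)\subseteq\IDer_k(A;s)$ for all $s$, which is immediate from chain (\ref{chain1}), so there is really nothing subtle, but it must be stated.

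Finally I would count: each leap $s\in\Leap_k(A)$ produces a strict drop $\overline{M}_{s-1}\supsetneq\overline{M}_s$ in a chain of $K$-subspaces of $V$, and a strictly decreasing chain of subspaces of a space of dimension $\dim_K V$ can have at most $\dim_K V$ strict drops. Therefore $\#\Leap_k(A)\le\dim_K V=\dim_K(\Der_k(A)/\mm^N\Der_k(A))$. For the ``in particular'' clause: if $\Der_k(A)$ is a finitely generated $A$-module, then $\Der_k(A)/\mm^N\Der_k(A)$ is a finitely generated module over the Artinian local ring $A/\mm^N$, hence of finite length, hence finite-dimensional over $K$; so the bound is finite. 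I do not expect to need the hypothesis that $A$ contains a field in this section's argument itself — it is presumably there so that the theorem can be fed, in the later sections, by the integrability criteria (Theorems \ref{Teo-DerIntInterseccionCompleta} and \ref{Teo-CasoNoEquidimensional}) together with Lemma \ref{Cor-SJcDer infinito-integrable1} to produce the required $N$ and $\mm$; but if a hypothesis is invoked I would simply quote it where the chain (\ref{chain1}) and the module structures are used.
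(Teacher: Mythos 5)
Your argument is correct, but it takes a genuinely different (and more economical) route than the paper. The paper lifts a $K$-basis of $V:=\Der_k(A)/\mm^N\Der_k(A)$ to derivations $\eta_1,\dots,\eta_r$, and then runs a basis-exchange induction, replacing the $\eta_i$ one at a time by derivations $\delta_i$ that leap at the successive leaps $s_1<\dots<s_j$, using Lemma \ref{Lem-Saltos finitos paso 1}(ii) (whose proof needs the unit trick $a\lambda_m=1+b$, $b\in\mm^N$) to rule out the degenerate case and to conclude at the end that $\Leap_k(A)=\{s_1,\dots,s_r\}$. You instead observe that $\mm^N\Der_k(A)\subset\IDer_k(A)\subseteq\IDer_k(A;s)$ for \emph{every} $s$, so the whole chain (\ref{chain1}) descends to a chain of submodules $\overline{M}_s$ of $V$ with $\overline{M}_{s-1}/\overline{M}_s\cong\IDer_k(A;s-1)/\IDer_k(A;s)$; leaps are exactly the strict drops of a descending chain of $K$-subspaces of a space of dimension $\dim_K V$, hence there are at most $\dim_K V$ of them. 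This bypasses Lemma \ref{Lem-Saltos finitos paso 1} and the exchange induction entirely; the paper's longer route has the side benefit of actually exhibiting, for each leap $s_i$, an explicit derivation $\delta_i$ realizing it and of identifying $\Leap_k(A)$ with $\{s_1,\dots,s_r\}$ when $r$ leaps occur, which your counting argument does not produce. (The false-looking claim in your opening paragraph that every leap satisfies $s\le N$ is never used in your actual argument, so it does no harm.)

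One point you should not wave away: the hypothesis that $A$ contains a field \emph{is} used in this proof, and exactly where the paper says it is. It is what guarantees, via Cohen's theorem applied to the complete local ring $A/\mm^N$, a coefficient field $K\hookrightarrow A/\mm^N$ making $V$ and all the $A/\mm^N$-submodules $\overline{M}_s$ into $K$-vector spaces; without it the $\overline{M}_s$ are only $A/\mm^N$-submodules and the stated bound $\dim_K V$ is not even defined (though your count would still go through with $\dim_K V$ replaced by the length of $V$ over $A/\mm^N$). With that justification inserted, your proof is complete.
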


\color{black}

To prove the theorem we will use Lemma \ref{Lem-Saltos finitos paso 1} stated below. First, a definition:

\begin{Def}
\label{Def-Conjunto span} 
For a given   finite collection of derivations,  $\delta_1,\ldots, \delta_r\in \Der_k(A)$, we will be considering the set: 
$$
\mathcal C(\delta_1,\ldots,\delta_r):=\left\{\sum_{i=1}^r \lambda_i\delta_i \ | \ \lambda_i\in \mathcal U(A) \cup \{0\}\right\}
$$
where  $\mathcal U(A)$ is the set of units in  $A$.  And for a given maximal ideal    ${\mathfrak m}\subset A$  we will be considering the set: 
$$
\mathcal D_{{\mathfrak m}}(\delta_1,\ldots,\delta_r):=\left\{\sum_{i=1}^r \lambda_i\delta_i \ | \ \lambda_i\in  (A\setminus {\mathfrak m})\cup \{0\}\right\}.
$$
\end{Def}

\color{black}
The next lemma generalizes results from \cite[\S 1.1]{NRT}. We include here the proof adapted to our (weaker) hypotheses to facilitate the reading of the paper.

\begin{lem}\label{Lem-Saltos finitos paso 1} Let $k$ be a ring and let $A$ be  a $k$-algebra. Suppose  that $\delta_1,\ldots, \delta_r\in \Der_k(A)$ are derivations such that for $i=1,\ldots, r$,   $\delta_i$ leaps at $s_i$,  with  $s_1<s_2<\ldots <s_r$. Then: 
	\begin{itemize}
		\item[(i)]  The only leaps produced by the set of derivations
		$$M=\mathcal C(\delta_1,\ldots, \delta_r)\subset \Der_k(A)$$ are $s_1,\ldots, s_r$.
		
		\item[(ii)] If there is some  maximal ideal ${\mathfrak m}\subset A$ and a positive integer $N$ such that ${\mathfrak m}^N  \Der_k(A) \subset \IDer_k(A)$, then, the only leaps produced by the set of derivations
		$$M_\mm=\mathcal D_{\mathfrak m}(\delta_1,\ldots, \delta_r)\subset \Der_k(A)$$ are $s_1,\ldots, s_r$.
	\end{itemize}
As a consequence, if $\Der_k(A)=M+L$ or if $\Der_k(A)=M_\mm+L$, with $L\subset \IDer_k(A)$,  then  $\Leap_k(A)=\{s_1,\ldots, s_r\}$.

\end{lem}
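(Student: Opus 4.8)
The plan is to prove both parts by showing that every nonzero element of the relevant set of derivations leaps at one of the prescribed values $s_1, \ldots, s_r$, and conversely that each $s_i$ is actually achieved. The key structural fact is that integrability is linear and filtered: the sets $\IDer_k(A;m)$ are $A$-submodules of $\Der_k(A)$, so forming $A$-linear combinations (with unit or non-$\mm$ coefficients) interacts well with the integrability filtration. I would begin by fixing notation for the leap level of a derivation $\delta$, writing $\ell(\delta)$ for the largest $s$ such that $\delta \in \IDer_k(A;s-1)$ (equivalently, $\delta$ leaps at $\ell(\delta)$, or $\delta$ is $\infty$-integrable if no such finite $s$ exists). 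The heart of the argument is the observation that, for a unit $\lambda \in \mathcal{U}(A)$, multiplication by $\lambda$ is an isomorphism of each $A$-module $\IDer_k(A;m)$ onto itself, so $\lambda\delta_i$ leaps at exactly the same value $s_i$ as $\delta_i$.

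For part (i), I would take an arbitrary nonzero $\delta = \sum_{i=1}^r \lambda_i \delta_i \in M$ with each $\lambda_i \in \mathcal{U}(A) \cup \{0\}$, let $j$ be the smallest index with $\lambda_j \neq 0$, and show $\delta$ leaps at $s_j$. The upper bound direction is easy: since $s_1 < \cdots < s_r$, each summand $\lambda_i \delta_i$ with $\lambda_i \neq 0$ lies in $\IDer_k(A; s_i - 1) \subseteq \IDer_k(A; s_j - 1)$ (using the chain (\ref{chain1}) and $s_i \geq s_j$ for $i \geq j$, together with $\lambda_i \delta_i \in \IDer_k(A; s_i-1)$ because $\lambda_i$ is a unit), so $\delta \in \IDer_k(A; s_j - 1)$. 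The real point is that $\delta \notin \IDer_k(A; s_j)$: here I would argue by contradiction. If $\delta$ were $s_j$-integrable, then since each $\lambda_i \delta_i$ for $i > j$ lies in $\IDer_k(A; s_i - 1) \subseteq \IDer_k(A; s_j)$ (as $s_i - 1 \geq s_j$ when $i > j$), subtracting them would force $\lambda_j \delta_j \in \IDer_k(A; s_j)$, and multiplying by $\lambda_j^{-1}$ would give $\delta_j \in \IDer_k(A; s_j)$, contradicting that $\delta_j$ leaps at $s_j$. This shows the only leaps produced are among $\{s_1, \ldots, s_r\}$, and each $s_i$ is realized by $\delta_i$ itself (taking $\lambda_i = 1$ and the rest zero), so the set of leaps produced is exactly $\{s_1, \ldots, s_r\}$.

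For part (ii) the coefficients $\lambda_i$ range over $(A \setminus \mm) \cup \{0\}$, which need not be units, so the clean isomorphism argument breaks down and this is where the extra hypothesis $\mm^N \Der_k(A) \subseteq \IDer_k(A)$ enters — I expect this to be the main obstacle. The idea is to localize at $\mm$: elements of $A \setminus \mm$ become units in $A_\mm$, so after passing to $A_\mm$ the argument of part (i) applies to recover the leap levels, provided localization is compatible with the integrability filtration. The hypothesis $\mm^N \Der_k(A) \subseteq \IDer_k(A)$ guarantees that the discrepancy between the local and global pictures is controlled — concretely, it ensures there are no \emph{new} leaps above the finite threshold forced by $\mm^N$, so that the leap level of $\lambda_i \delta_i$ (with $\lambda_i \in A \setminus \mm$) still equals $s_i$ rather than being pushed higher by the non-unit factor. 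I would carefully verify that for $\lambda \in A \setminus \mm$ a derivation $\lambda \delta_i$ leaps at the same $s_i$: the containment $\lambda \delta_i \in \IDer_k(A; s_i - 1)$ is immediate from the module structure, while non-$s_i$-integrability requires the localization argument together with the finiteness supplied by the hypothesis. Once each generator of $M_\mm$ has the correct leap level, the same minimal-index subtraction argument as in (i) goes through verbatim.

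Finally, the consequence is a direct corollary: if $\Der_k(A) = M + L$ (or $M_\mm + L$) with $L \subseteq \IDer_k(A)$, then any $\delta \in \Der_k(A)$ decomposes as $\delta = \mu + \eta$ with $\mu \in M$ (resp. $M_\mm$) and $\eta \in \IDer_k(A)$; since $\eta$ is $\infty$-integrable it contributes no leap, so the leap level of $\delta$ equals that of $\mu$, which by (i) (resp. (ii)) lies in $\{s_1, \ldots, s_r\}$. Because each $s_i$ is attained by $\delta_i \in M$, we conclude $\Leap_k(A) = \{s_1, \ldots, s_r\}$. The one point to handle with care is that a leap of $A$ is produced by \emph{some} derivation in $\Der_k(A)$, and the decomposition shows every such derivation's leap already appears among the $s_i$, so no spurious leaps arise and the equality of sets holds.
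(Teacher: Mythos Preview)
Your argument for part (i) is correct and matches the paper's proof essentially verbatim: take the minimal index $j$ with $\lambda_j\neq 0$, observe $\delta\in\IDer_k(A;s_j-1)$, and if $\delta$ were $s_j$-integrable then subtracting the later terms (each in $\IDer_k(A;s_j)$) and dividing by the unit $\lambda_j$ would force $\delta_j\in\IDer_k(A;s_j)$.

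For part (ii), however, your proposed route through localization at $\mm$ has a genuine gap. Passing to $A_\mm$ does make $\lambda_m$ a unit, and a Hasse--Schmidt derivation of $A$ localizes to one of $A_\mm$, so from $\lambda_m\delta_m\in\IDer_k(A;s_m)$ you would indeed get $\delta_m/1\in\IDer_k(A_\mm;s_m)$. But to reach a contradiction you then need either that $\delta_m/1$ still leaps at $s_m$ in $A_\mm$, or that $s_m$-integrability of $\delta_m/1$ in $A_\mm$ forces $s_m$-integrability of $\delta_m$ in $A$. Neither direction is available from the hypotheses: integrability is not in general \emph{reflected} along $A\to A_\mm$, and the assumption $\mm^N\Der_k(A)\subset\IDer_k(A)$ says nothing about pulling an $s_m$-integral with denominators back to $A$. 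Your phrase ``the discrepancy between the local and global pictures is controlled'' does not translate into an argument here.

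The paper avoids localization entirely with an elementary trick that uses the hypothesis directly. Since $\mm$ is maximal and $\lambda_m\notin\mm$, the image of $\lambda_m$ in the local ring $A/\mm^N$ is a unit, so there exists $a\in A\setminus\mm$ with $a\lambda_m=1+b$ for some $b\in\mm^N$. If $\lambda_m\delta_m$ were $s_m$-integrable, then so would be $a\lambda_m\delta_m=\delta_m+b\delta_m$; but $b\delta_m\in\mm^N\Der_k(A)\subset\IDer_k(A)\subset\IDer_k(A;s_m)$, hence $\delta_m\in\IDer_k(A;s_m)$, contradicting that $\delta_m$ leaps at $s_m$. This replaces your localization step and stays entirely inside $A$. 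With this fix, the rest of your outline (the minimal-index subtraction and the consequence about $\Leap_k(A)$) goes through as you wrote it.
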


\begin{proof} (i)  From the hypothesis, $M$ leaps at $s_1,\ldots, s_r$. We will show that $M$ produces no more leaps.  Let  $\delta\in M\setminus \{0\}$. Then there are some   $\lambda_1,\ldots, \lambda_r\in \mathcal U(A) \cup \{0\}$ such that  $$\delta=\lambda_1\delta_1+\ldots+\lambda_r\delta_r.$$ 
	Let $m=\min \{e\in \{1,\ldots, r\} \ | \ \lambda_e\neq 0\}$. Then,  $\delta=\lambda_m\delta_m+\ldots+\lambda_r\delta_r$ is $(s_m-1)$-integrable. Now, if $\delta$ were $s_m$-integrable, then  $\lambda_m\delta_m$ would be  $s_m$-integrable but, since $\lambda_m\in \mathcal{U}(A)$, it would follow that $\delta_m$ is $s_m$-integrable, which contradicts the fact that $\delta_m$ leaps at $s_m$.
	
	\medskip
	
	(ii) From the hypothesis, $M_\mm$ leaps at $s_1,\ldots, s_r$. We will show that $M_\mm$ produces no more leaps.  Let  $\delta\in M_\mm\setminus \{0\}$. Then there are some   $\lambda_1,\ldots, \lambda_r\in( A\setminus {\mathfrak m} )\cup \{0\}$ such that  $$\delta=\lambda_1\delta_1+\ldots+\lambda_r\delta_r.$$ 
	Let $m=\min \{e\in \{1,\ldots, r\} \ | \ \lambda_e\neq 0\}$. Then,  $\delta=\lambda_m\delta_m+\ldots+\lambda_r\delta_r$ is $(s_m-1)$-integrable. Now, if $\delta$ were $s_m$-integrable, then  $\lambda_m\delta_m$ would be  $s_m$-integrable. Since $\lambda_m\in  A\setminus {\mathfrak m}$, there is some $a\in A\setminus {\mathfrak m}$ such that $a \lambda_m=1+ b$,  with $b\in {\mathfrak m}^N$. Hence, 
	$a\lambda_m\delta_m= \delta_m+ b\delta_m$
	from where it would follow that  $\delta_m$ is $s_m$-integrable, which contradicts the fact that $\delta_m$ leaps at $s_m$.
\end{proof}

\begin{proof1}{Theorem}{\ref{Prop-Saltos finitos}} 
	Since $A/\mm^N$ is local  complete and because $A$ contains a field, there is an isomorphic copy of $K=A/\mm$ in $A/\mm^N$  and $\Der_k(A)/\mm^N\Der_k(A)$ is a $K$-vector space. If the dimension of this $K$-vector space is not finite, then the result is trivial, so   assume that $\dim_K(\Der_k(A)/\mm^N\Der_k(A))=r<\infty$. Select  $\eta_1,\ldots, \eta_r \in \Der_k(A)$ so that  
	$\overline{\eta}_1,\ldots, \overline{\eta}_r\in \Der_k(A)/\mm^N\Der_k(A)$ form a basis of the later $K$-vector space. Then  
	$$
	\Der_k(A)={\mathcal D}_{\mathfrak m}(\eta_1,\ldots,\eta_r)+\mm^N \Der_k(A).
	$$
	If  $\eta_i$ does not leap for $i=1,\ldots, r$, then  $\Leap_k(A)=\emptyset$. We will show that otherwise  the cardinal of   $\Leap_k(A)$ is bounded above by $r$.

	Suppose that $r\leq \#  \Leap_k(A)$, and let  $s_1<\ldots< s_r$  be the first $r$ leaps.  Since the hypothesis is that $\mm^N\Der_k(A)\subseteq \IDer_k(A)$, there must be some  $\eta_i$ leaping at  $s_1$. Relabeling the $\eta_i$ if needed, we may assume that  $i=1$. Set $\delta_1:=\eta_1$. By  an inductive argument,  suppose     that we have found  $\delta_1,\ldots, \delta_j\in \Der_k(A)$ leaping at $s_1<s_2<\ldots<s_j$ so that  
	$$
	\Der_k(A)={\mathcal D}_{\mathfrak m}(\delta_1,\ldots, \delta_j, \eta_{j+1},\ldots,\eta_r)+\mm^N\Der_k(A),$$
	and,
	$$  \Der_k(A)/\mm^N\Der_k(A)=\langle \overline \delta_1,\ldots, \overline\delta_j, \overline \eta_{j+1},\ldots, \overline\eta_r\rangle_K,$$
	where $\overline\delta_i=\delta_i\mod \mm^N\Der_k(A)$.  Now let  $\delta_{j+1}\in \Der_k(A)$ leaping at  $s_{j+1}$, with $\overline\delta_{j+1}$ its image at  $\Der_k(A)/\mm^N\Der_k(A)$. Then, 
	$$
	\overline\delta_{j+1}=\overline\lambda_1\overline\delta_1+\ldots+\overline\lambda_j\overline\delta_j+\overline\lambda_{j+1}\overline\eta_{j+1}+\ldots+\overline\lambda_r\overline\eta_r
	$$ 
	with $\overline\lambda_i\in K$. For  $i=1,\ldots, r$, let $\lambda_i\in (A\setminus {\mathfrak m}) \cup \mm^N$ be so that  $\overline\lambda_i=\lambda_i\mod \mm^N$.  If  $\overline\lambda_{j+1}=\ldots=\overline\lambda_r=0$, then  $\delta_{j+1}\in {\mathcal D}_{\mathfrak m}(\delta_1,\ldots,\delta_j)+\mm^N\Der_k(A)$. But then by  Lemma \ref{Lem-Saltos finitos paso 1},     either  $\delta_{j+1}$ leaps at $s_i$ with  $i\in \{1,\ldots, j\}$,  or  it does not leap at all, contradicting, in both cases,    the hypotheses on $\delta_{j+1}$. Thus we may assume that  $\overline\lambda_{j+1}\neq 0$. Hence, 
	$$
	\Der_k(A)/\mm^N\Der_k(A)=	\langle \overline\delta_1,\ldots, \overline\delta_j, \overline\eta_{j+1},\ldots, \overline\eta_r\rangle_K=\langle \overline\delta_1,\ldots, \overline\delta_j, \overline\delta_{j+1},\overline\eta_{j+2},\ldots, \overline\eta_r\rangle_K,
	$$
	and therefore,  
	$$
	\Der_k(A)={\mathcal D}_{\mathfrak m}(\delta_1,\ldots,\delta_{j+1}, \eta_{j+2}, \ldots, \eta_r)+\mm^N\Der_k(A)
	$$
	with  $\delta_i$ leaping at    $s_i$ for $i=1,\ldots, j+1$.   When  $j=r$, it follows by  Lemma \ref{Lem-Saltos finitos paso 1}  that   $\Leap_k(A)=\{s_1,\ldots, s_r\}$.
\end{proof1}

\color{black}
\section{Maximal rank of Jacobian matrices and generic generators of ideals}

We will start by fixing some notation. Along this section we will use $R$ for the ring of polynomials in $n$ variables over $k$,  $R=k[x_1,\ldots, x_n]$. Given a collection of elements  $\mathcal F=\{g_1,\ldots, g_\ell\} \subset R$, we will use   $J(\mathcal F)=J(g_1,\ldots, g_\ell)$ for the matrix 
$$
J(\mathcal F)=J(g_1,\ldots, g_\ell):=\left(\begin{array}{ccc}
	\partial_1 g_1 & \cdots & \partial_{n} g_1 \\
\vdots & \cdots & \vdots \\
\partial_1 g_{\ell} & \cdots &\partial_{n} g_{\ell} \\
\end{array}
\right),
$$
where $\partial_j g_i:=\frac{\partial g_i}{\partial x_j}$ for $j=1\ldots, n$, and $i=1\ldots, \ell$. 

\

Let  $I\subset R$  be an ideal and set    $A=R/I$. We will write  $J_\ell(A)$ to refer to the $(n-\ell)$--Fitting ideal of  $\Omega_{A/k}$, for  $\ell=0,\ldots, n$. When there is no risk of confusion we will write   $J_\ell$ instead of  $J_\ell(A)$.  When $\ell=\dim (A)$ then $J_\ell$ is the usual Jacobian ideal of $A$. For the remaining of the section we will be assuming that $I$ is a radical ideal of $R$.

\begin{Def}\label{Def-FittingNoNuloAltura}
Let  $\mathcal P\subset \Min(I)$. We will say that the condition $J^{\het}_{\mathcal P}$ holds at 
 $A$  if  $J_{\het(P)}(A)A_{P}=A_{P}$ for all $P\in \mathcal P$. When  $\mathcal P=\Min (I)$, we will simply write  $J^{\het}:=J^{\het}_{\Min(I)}$.
\end{Def}
 
\begin{nota} Observe that  the  $J^{\het}$-condition holds at $A$ if for instance  $A$ is generically smooth over $k$ (i.e., if there is a dense open set ${\mathcal U}$  of $\Spec(A)$ so that $\mathcal U\to \Spec(k)$ is smooth). 
\end{nota}

Our purpose is to prove the following result. 
 
\begin{prop}\label{Cor-ExisteSistemaGeneradores}
Let  $k$ be a regular ring, let $R=k[x_1,\ldots, x_n]$, let  $I\subset R$ be a radical ideal and suppose that the  condition $J^{\het}$ holds at    $A=R/I$. Let  $r:=\max \{\het(P) \ | \ P\in \Min(I)\}$ and set  $S:=R\setminus (\bigcup_{P\in \Min(I)} P)$. Then there is a system of generators   $\mathcal S$ of  $I$ from which we can select  a subset   $\mathcal F$ with  $r$ elements so that    $\rank J(\mathcal F) =\het(P)$   at $A_P$    for all  $P\in \Min(I)$. As a consequence,   $S^{-1} I=\langle \mathcal F\rangle \subset S^{-1}R$. 
\end{prop}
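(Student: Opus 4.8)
The plan is to work locally at each minimal prime $P\in\Min(I)$ and then glue. Fix $P\in\Min(I)$ with $h=\het(P)$. Since $k$ is regular and $R=k[x_1,\ldots,x_n]$ is a polynomial ring over $k$, $R$ is regular; hence $R_P$ is a regular local ring and $I_P$ is a prime ideal of height $h$ with $(R/I)_P=A_P$. The condition $J^{\het}$ says that $J_h(A)A_P=A_P$, i.e.\ the $(n-h)$-th Fitting ideal of $\Omega_{A/k}$ generates the unit ideal locally at $P$; equivalently the $h\times h$ minors of the Jacobian matrix $J$ of any generating set of $I$ generate $A_P$. Concretely this means: there exist generators $g_1,\ldots,g_s$ of $I$ and an $h$-subset of them whose Jacobian submatrix has an $h\times h$ minor that is a unit in $R_P$ (equivalently, in $A_P$). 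I would phrase this as: after localizing at $P$, the ideal $I_P\subset R_P$ is generated by $h$ elements whose Jacobian matrix has rank $h$ modulo $P$, and by the Jacobian criterion over a regular base these $h$ elements form a regular sequence generating $I_P$, so $S_P^{-1}I=\langle f_1^{(P)},\ldots,f_h^{(P)}\rangle$ where $S_P=R\setminus P$.

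The next step is a finiteness/patching argument. There are finitely many minimal primes $P_1,\ldots,P_t$ of $I$. For each $P_j$ pick, as above, elements $f^{(j)}_1,\ldots,f^{(j)}_{h_j}\in I$ (with $h_j=\het(P_j)$) and a choice of $h_j$ columns $c^{(j)}_1<\cdots<c^{(j)}_{h_j}$ in $\{1,\ldots,n\}$ such that the corresponding minor of $J(f^{(j)}_1,\ldots,f^{(j)}_{h_j})$ lies outside $P_j$. Now I would assemble $\mathcal S$ as a single system of generators of $I$ containing every $f^{(j)}_i$ (together with whatever extra generators are needed to actually generate $I$), and I would build the desired $\mathcal F$ with $r=\max_j h_j$ elements by taking generic $k$-linear combinations: set $F_1,\ldots,F_r$ to be suitably chosen combinations of the $f^{(j)}_i$ so that, simultaneously for every $j$, the Jacobian of $(F_1,\ldots,F_r)$ restricted to the columns $c^{(j)}_1,\ldots,c^{(j)}_{h_j}$ has rank exactly $h_j$ at $A_{P_j}$. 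That such a common choice exists is a standard "prime avoidance in the space of coefficients" argument: for each $j$ the set of coefficient tuples failing the rank condition at $P_j$ is the vanishing locus of a nonzero form over the residue field $\kappa(P_j)$, and one uses that a polynomial nonvanishing at each of finitely many points of affine space (here, an infinite field is needed, so one may need to pass to $k[t]$ or use that $\mathcal U(A)$/genericity lets us enlarge coefficients — alternatively argue with one prime at a time and note the $F_i$ need only be chosen to work at all $P_j$ at once, which is a finite intersection of dense conditions) can be found. Having fixed such $\mathcal F=\{F_1,\ldots,F_r\}$, by construction $\rank J(\mathcal F)=\het(P)$ at $A_P$ for every $P\in\Min(I)$.

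Finally, for the last assertion: $S=R\setminus\bigcup_{P\in\Min(I)}P$, so $S^{-1}R$ is a semilocal ring whose maximal ideals are exactly the $S^{-1}P$ for $P\in\Min(I)$, and $S^{-1}I$ is contained in $\langle\mathcal F\rangle^{\mathrm{loc}}$ trivially since $\mathcal F\subset I$; for the reverse inclusion $S^{-1}I\subseteq\langle\mathcal F\rangle$ it suffices to check at each maximal ideal $S^{-1}P$, i.e.\ to show $I_P=\langle F_1,\ldots,F_r\rangle R_P$. But $I_P$ is prime of height $h=\het(P)\le r$, $R_P$ is regular local, and among $F_1,\ldots,F_r$ the rank condition gives $h$ of them, say after renumbering $F_1,\ldots,F_h$, whose Jacobian is nondegenerate modulo $P$; these $h$ elements lie in $I_P$, and since they are part of a regular system of parameters' worth of data they generate a prime of height $h$ contained in $I_P$, hence equal to $I_P$ (and the remaining $F_i\in I_P$ are then automatically in that ideal). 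So $I_P=\langle\mathcal F\rangle R_P$ for all $P\in\Min(I)$, whence $S^{-1}I=\langle\mathcal F\rangle S^{-1}R$, as claimed.

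\textbf{Main obstacle.} I expect the delicate point to be the simultaneous genericity step: producing one set $\mathcal F$ of $r$ elements that works at \emph{all} minimal primes at once. Over an infinite residue field this is routine prime avoidance; the care is needed when the $\kappa(P)$ may be finite or when $k$ is not a field, where one must either enlarge coefficients into a polynomial extension, exploit the freedom in choosing units (as in the set $\mathcal C(\cdots)$ of Definition \ref{Def-Conjunto span}), or argue that the "bad" locus for each $P_j$ is a proper closed subset and use that a suitable generic combination avoids a finite union of them — here the Jacobian criterion over the regular base $k$ (so that $I_P$ is generated by a regular sequence detected by the maximal-rank minor) is exactly what makes the rank condition open and hence generically satisfiable.
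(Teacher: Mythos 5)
Your overall architecture (first satisfy the rank condition locally at each minimal prime, then deduce $S^{-1}I=\langle\mathcal F\rangle$ from the regularity of $R_P$ and Nakayama) matches the paper, and your treatment of the final assertion is essentially the paper's. But the central step --- producing a \emph{single} $r$-element set $\mathcal F$ whose Jacobian has rank $\het(P)$ at \emph{every} $P\in\Min(I)$ simultaneously --- is exactly the point you flag as the ``main obstacle,'' and you do not close it. Your primary mechanism, choosing generic coefficient tuples so that a nonzero form over each residue field $\kappa(P_j)$ does not vanish, genuinely fails when the $\kappa(P_j)$ are finite (a nonzero polynomial can vanish at every rational point), and the alternatives you list (passing to $k[t]$, ``exploiting units,'' intersecting dense conditions) are not carried out; the last one does not by itself resolve the finite-field issue. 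As written, the proposal is a correct local analysis plus an unproved globalization claim.

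The paper closes this gap with a concrete, residue-field-independent argument (Proposition \ref{Lem-ExisteSistemaGen1}): one builds $\mathcal F$ one element at a time, and to add the $(\ell+1)$-st element one inducts on the number $m$ of primes $P\in\mathcal P$ with $\het(P)>\ell$ at which the rank is still deficient. For $m>1$ the inductive hypothesis produces $h_m$ working at all primes except possibly $P_1$ and $h_1$ working at all primes except possibly $P_m$; one then takes $g=h_m+\lambda h_1$ with $\lambda\in (P_2\cap\cdots\cap P_m)\setminus P_1$. Multilinearity of the determinant in the last row shows the relevant $(\ell+1)$-minor of $J(f_1,\ldots,f_\ell,g)$ is nonzero at $A_{P_1}$ (there $\lambda$ is a unit and the $h_m$-minor vanishes since that matrix has rank only $\ell$ at $P_1$) and at each $A_{P_i}$, $i\geq 2$ (there $\lambda\in P_i$ kills the $h_1$-contribution). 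This element-level prime avoidance is the idea your proposal is missing; if you replace your ``generic coefficients over $\kappa(P_j)$'' step with such a two-candidates-plus-$\lambda$ combination, your argument goes through.
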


\begin{proof} The  first statement of the proposition follows from (the more general statement in) Proposition \ref{Lem-ExisteSistemaGen1} below. 
Taking this from granted, the second assertion   can be deduced   from a similar argument as that given  in the proof of \cite[Theorem 11]{mat-intder-I}: the hypothesis on the rank of $J(\mathcal F)$ guarantees that the image of the  linear map of  $(R/P)_P$-vector spaces 
	$$\begin{array}{rrcl} 
		\frac{\partial }{\partial X}: &  P/P^2&  \longrightarrow & (A_P)^n =((R/P)_{P})^n \\
		& f & \mapsto &   \left(\partial_1 f, \ldots,\partial_n f \right),  
	\end{array}$$ 
has maximal rank (equal to $\het(P)$) when restricted to the classes of the elements in ${\mathcal F}$. As this is the case  at all minimal primes $P$ of $I$,  the claim follows. 
\end{proof}

\begin{prop}\label{Lem-ExisteSistemaGen1}
Let  $k$ be a regular ring, set $R=k[x_1,\ldots, x_n]$, let  $I\subset R$ be a radical ideal and let $A=R/I$. Let  $\mathcal P\subseteq \Min(I)$  be a non-empty subset  so that the condition $J^{\het}_{\mathcal P}$ holds at $A$.  If  $r=\max\{\het(P)\ | \ P\in \mathcal P\}$, then there is a set of generators of $I$, $\mathcal S=\{f_1,\ldots, f_s\}$, for which there is a subset with $r$ elements,     $\mathcal F\subset \mathcal S$, such that  $\rank J(\mathcal F)= \het(P)$ at $A_P$ for all $P\in \mathcal P$. 
\end{prop}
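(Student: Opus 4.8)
The plan is to reduce the statement to a purely ideal-theoretic one about local generation of $I$, and then to prove that by an iterated prime-avoidance construction. I would first fix a finite set of generators $h_1,\dots,h_m$ of $I$ (which exists as $R$ is Noetherian); note that $\mathcal P\subseteq\Min(I)$ is then a finite set of pairwise incomparable primes.

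\textbf{Step 1: translating the Jacobian-rank condition.} Fix $P\in\mathcal P$. Since $k$ is regular, $R=k[x_1,\dots,x_n]$ is regular, so $R_P$ is a regular local ring of dimension $\het(P)$; and since $I$ is radical and $P$ is minimal over $I$, we have $I_P=PR_P$, so $I_P/I_P^2=PR_P/(PR_P)^2$ is a $\kappa(P)$-vector space of dimension exactly $\het(P)$. The conormal map $d_P\colon I_P/I_P^2\to\Omega_{R_P/k}\otimes\kappa(P)=\kappa(P)^n$, $f\mapsto(\partial_1 f(P),\dots,\partial_n f(P))$, is $\kappa(P)$-linear, and the dimension of its image equals the rank over $\kappa(P)$ of the Jacobian matrix $J(h_1,\dots,h_m)$ (because the classes of the $h_i$ generate $I_P/I_P^2$). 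The hypothesis $J^{\het}_{\mathcal P}$ says precisely that some $\het(P)\times\het(P)$ minor of $J(h_1,\dots,h_m)$ is a unit in $A_P=\kappa(P)$, i.e. this rank is at least $\het(P)$; since the source of $d_P$ has dimension $\het(P)$, it follows that $d_P$ is injective. Consequently, for any $\mathcal F=\{f_1,\dots,f_r\}\subset I$, the rank over $\kappa(P)$ of $J(\mathcal F)$ equals the $\kappa(P)$-dimension of the span of the classes of $f_1,\dots,f_r$ in $I_P/I_P^2$, and by Nakayama this span is all of $I_P/I_P^2$ (equivalently, the rank is $\het(P)$) if and only if $f_1,\dots,f_r$ generate $I_P$. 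Hence it suffices to produce $f_1,\dots,f_r\in I$ with $\langle f_1,\dots,f_r\rangle R_P=IR_P$ for every $P\in\mathcal P$; then $\mathcal S:=\{f_1,\dots,f_r,h_1,\dots,h_m\}$ generates $I$ and $\mathcal F:=\{f_1,\dots,f_r\}$ works.

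\textbf{Step 2: the construction.} I would build $f_1,\dots,f_r$ inductively. Given $f_1,\dots,f_{j-1}$ with $1\le j\le r$, set $\mathcal P'=\{P\in\mathcal P:\langle f_1,\dots,f_{j-1}\rangle R_P\neq IR_P\}$. If $\mathcal P'=\emptyset$, take $f_j$ to be any element of $I$. Otherwise, for each $P\in\mathcal P'$ Nakayama gives $\langle f_1,\dots,f_{j-1}\rangle R_P+PI_P\subsetneq I_P$, so choose $g_P\in I$ whose class in $I_P/(\langle f_1,\dots,f_{j-1}\rangle R_P+PI_P)$ is nonzero; and since the primes of $\mathcal P'$ are pairwise incomparable, choose $a_P\in\bigl(\bigcap_{Q\in\mathcal P',\,Q\neq P}Q\bigr)\setminus P$. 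Put $f_j:=\sum_{P\in\mathcal P'}a_Pg_P\in I$. Fixing $P_0\in\mathcal P'$: for $P\neq P_0$ we have $a_P\in P_0$, so $a_Pg_P\in P_0I_{P_0}$, while $a_{P_0}$ is a unit in $R_{P_0}$; hence $f_j$ has the same (nonzero) class as $a_{P_0}g_{P_0}$ in $I_{P_0}/(\langle f_1,\dots,f_{j-1}\rangle R_{P_0}+P_0I_{P_0})$. Therefore adjoining $f_j$ lowers $\dim_{\kappa(P)}\bigl(I_P/(\langle f_1,\dots,f_j\rangle R_P+PI_P)\bigr)$ by exactly $1$ for every $P\in\mathcal P'$, and leaves it $0$ for $P\in\mathcal P\setminus\mathcal P'$. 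As this dimension equals $\het(P)\le r$ when $j=0$, after $r$ steps it is $0$ for every $P\in\mathcal P$, i.e. $\langle f_1,\dots,f_r\rangle R_P=IR_P$ by Nakayama, as wanted. (If some $f_j$ was chosen arbitrarily because $\mathcal P'$ was empty, one can make the $f_j$ pairwise distinct nonzero elements by replacing such an $f_j$ by a fresh element of $I$; this is harmless, since for any subset of $I$ the rank of its Jacobian at $\kappa(P)$ is bounded above by $\het(P)$ thanks to the injectivity of $d_P$.)

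\textbf{Main obstacle.} The delicate point is obtaining, at each inductive step, a \emph{single} $f_j$ that is simultaneously a new local generator at every prime of $\mathcal P'$: a naive ``generic linear combination of the $h_i$'' argument is unavailable here, because the residue fields $\kappa(P)$ may be finite, and module-theoretic prime avoidance fails in general. The Chinese-remainder-type combination $\sum_P a_Pg_P$, with the $a_P$ chosen to lie in every $Q\in\mathcal P'\setminus\{P\}$ but not in $P$, is exactly what makes the induction go through. The other point worth stressing is Step 1, which is where regularity of $k$ is essential (it forces $\dim_{\kappa(P)}I_P/I_P^2=\het(P)$, hence injectivity of $d_P$ by a dimension count) and where $I$ being radical is used (to identify $I_P$ with $PR_P$).
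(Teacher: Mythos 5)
Your proof is correct, and at its core it is the same argument as the paper's: both build $\mathcal F$ one element at a time, and both overcome the failure of generic linear combinations over possibly finite residue fields by gluing local choices with coefficients taken in $\bigl(\bigcap_{Q\neq P}Q\bigr)\setminus P$ (your $a_P$ is exactly the paper's $\lambda\in(P_2\cap\ldots\cap P_m)\setminus P_1$). The two differences are presentational but worth noting. First, the paper manipulates minors of Jacobian matrices directly and runs an inner induction on the number of deficient primes, combining two candidate elements at a time, whereas you glue all deficient primes in a single Chinese-remainder-type sum; this shortens the inner induction to one step. Second, your Step 1 reformulates the rank condition as local generation of $I_P=PR_P$ via injectivity of the conormal map $I_P/I_P^2\to\kappa(P)^n$; this makes explicit both the upper bound $\rank J(\mathcal F)\leq \het(P)$ (which the paper uses implicitly when it asserts $\rank J(\mathcal S)=\het(P)$ at $A_P$) and the precise roles of the regularity of $k$ and the radicality of $I$. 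Each approach buys something: the paper's stays entirely at the level of the matrices that appear later in the integrability arguments, while yours isolates the hypotheses more cleanly and is easier to verify.
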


\begin{proof} The proposition follows from the following claim: 
\begin{quote} 
	{\bf Claim.} {\em Let 
   $\mathcal S=\{f_1,\ldots, f_s\}\subset R$ be a set of generators of $I$ and suppose there is a subset $\mathcal F\subset \mathcal S$ such that  $\rank J(\mathcal F) =\min\{\sharp \mathcal F, \het(P)\}$ at $A_P$  for all  $P\in \mathcal P$. Then, under the hypothesis of the proposition, there is some  $g\in I$ such that  $\rank(J(\mathcal F\cup \{g\}))=\min\{\sharp \mathcal F+1,\het (P)\}$ at $A_P$ for all $P\in \mathcal P$. }
\end{quote}	
	
 Suppose $I=\langle g_1,\ldots, g_\rho\rangle$. Taking the claim for granted, there exists some  $f_1\in I$ so that $\rank J(f_1)=\min \{1,\het(P)\}$ at $A_P$ for all $P\in \mathcal P$. Take $\mathcal S=\{f_1, g_1,\ldots, g_\rho\}$  as a collection of generators of $I$. Now  $\mathcal F=\{f_1\}$ and  use the claim again if needed. After a finite number of iterations, the result follows. 

\medskip 

Thus, it remains to prove the claim. First of all, if  $\mathcal F=\emptyset$ then  set  $\rank(J(\mathcal F))=0$.  Let $\ell=\sharp\mathcal F$. If $\het(P)\leq \ell$ for all    $P\in \mathcal P$, then $\rank J(\mathcal F)=\het(P)$ and there is nothing to prove since the inequality  $\rank J(\mathcal F)\leq \het(P)$ always holds. 

\medskip

Otherwise, set $\mathcal Q:=\{P\in \mathcal P \ | \ \het(P)>\ell\}$  and let  $m=\sharp \mathcal Q$. We will prove the claim  by induction on $m\geq 1$.   Suppose  $m=1$, so  $\mathcal Q=\{P\}$. To ease the notation suppose  $\mathcal F=\{f_1,\ldots, f_\ell\}$ (the set could be empty if $\ell=0$).

\medskip

The hypothesis is that  $\rank J(\mathcal S)=\het(P)>\ell$ at $A_P$. Since  $\rank J(f_1,\ldots, f_\ell)=\ell$ at $A_P$  and $J(f_1,\ldots, f_\ell)$ is a submatrix of    $J(\mathcal S)$, there must be a minor of size  $\ell$ in  $J(\mathcal S)$ determined by the first $\ell$ rows  which does not vanish at   $A_P$. Then, there must be some   $(\ell+1)$-minor in  $J(\mathcal S)$  which does not vanish at   $A_P$, and without loss of generality we can assume that it is determined by the first   $\ell+1$ rows. Setting  $g=f_{\ell+1}$ the result follows. 

\medskip

Now assume the claim holds whenever $0\leq \sharp  \mathcal Q<m$  and suppose  that    $\sharp  \mathcal Q=m>1$. To ease the notation set $\mathcal F=\{f_1,\ldots, f_\ell\}$  and  $\mathcal Q=\{P_1,\ldots, P_m\}$. 

\medskip

Define  $\mathcal P^1=\mathcal P\setminus\{P_1\}$ and  $\mathcal P^m=\mathcal P\setminus \{P_m\}$. Then  $\mathcal Q_1:=\{P\in \mathcal P^1 \ | \ \het(P)>\ell\}=\{P_2,\ldots, P_m\}$  and   $\mathcal Q_m:=\{P\in \mathcal P^m \ | \ \het(P)>\ell\}=\{P_1,\ldots, P_{m-1}\}$. From the inductive hypothesis, there is some    $h_m\in I$ such that  $\rank(J(f_1,\ldots, f_\ell, h_m))=\min\{\ell+1,\het(P)\}$ at $A_P$ for all   $P\in \mathcal P^1$ and there is some  $h_1\in I$ such that  $\rank(J(f_1,\ldots, f_\ell, h_1))=\min\{\ell+1,\het(P)\}$ at  $A_P$ for all  $P\in \mathcal P^m$.

\medskip

If either $\rank(J(f_1,\ldots, f_\ell, h_m))=\min\{\ell+1,\het(P_1)\}$ at $A_{P_1}$ or  $\rank(J(f_1,\ldots, f_\ell, h_1))=\min\{\ell+1,\het(P_m)\}$  at $A_{P_m}$, then setting   $g=h_m$ or  $g=h_1$ the result follows.  Otherwise, 
$$
\begin{array}{l}
	\rank(J(f_1,\ldots, f_\ell, h_m))=\left\{\begin{array}{l}\min\{\ell+1,\het(P)\} \mbox{ at } A_{P} \ \forall P\in \mathcal P^1\\
		\ell\mbox{ en } A_{P_1}\end{array}\right.\\
	\rank(J(f_1,\ldots, f_\ell, h_1))=\left\{\begin{array}{l}\min\{\ell+1,\het(P)\} \mbox{ at } A_{P} \ \forall P\in \mathcal P^m\\
		\ell\mbox{ en } A_{P_m}.\end{array}\right.
\end{array}
$$
Take  $\lambda\in (P_2\cap \ldots\cap P_m)\setminus P_1$  and let  $g:=h_m+\lambda h_1\in I$. Then  $\rank J(f_1,\ldots, f_\ell,g) =\min\{\ell+1, \het(P)\}$ at $A_P$ for all  $P\in \mathcal P$. This is straightforward  for $P\not\in \mathcal Q$. And if $P_i\in \mathcal Q$, then  set   $\mathcal C\subset \{1,\ldots, n\}$, with  $\sharp \mathcal C=\ell+1$, so that the  $(\ell+1)$-minor of the matrix $J(f_1,\ldots, f_\ell,h_1)$   determined by the columns of  $\mathcal C$  if $i=1$, or  $J(f_1,\ldots, f_\ell,h_m)$ if  $i\neq 1$, does not vanish at  $A_{P_i}$. From here  it follows that the   $(\ell+1)$-minor of  $J(f_1,\ldots,f_\ell,g)$  determined by  $\mathcal C$ is non-zero at  $A_{P_i}$ for  $i=1,\ldots, m$.

\end{proof}

\begin{nota} \label{series_jacobiano}
	Observe that Propositions \ref{Cor-ExisteSistemaGeneradores} and  \ref{Lem-ExisteSistemaGen1} also hold when replacing   $R=k[x_1,\ldots, x_n]$ by $\widetilde{R}:=k\lbr x_1,\ldots, x_n\rbr$. To be more precise, notice first the universal finite module  of differentials of $\widetilde{R}$ over $k$, $\widetilde{\Omega}_{\widetilde{R}/k}$,  exists,  and,  in fact, $\widetilde{\Omega}_{\widetilde{R}/k}=\oplus_{r=1}^n\widetilde{R}dx_i$. Moreover,   the universal derivative, $\widetilde{d}: \widetilde{R}\to \widetilde{\Omega}_{\widetilde{R}/k}$, is the continuous extension of the derivative $d: R\to  \Omega_{R/k}$. 
	From the previous discussion it follows that    $\widetilde{\Omega}_{\widetilde{R}/k}$ is a finite free $\widetilde{R}$-module and for $f(x_1,\ldots, x_n)\in \widetilde{R}$, 
	$$\widetilde{d}(f(x_1,\ldots, x_n))=\sum_{i=1}^n\frac{\partial f}{\partial x_i}dx_i,$$
	see 	\cite[Example 12.7]{Kunz}. 
	Now let $I\subset \widetilde{R}$ be an ideal and set $\widetilde{A}=\widetilde{R}/I$. Then by \cite[Corollary 11.10]{Kunz}, the universal finite module of differentials of $\widetilde{A}$ over $k$, $\widetilde{\Omega}_{\widetilde{A}/k}$,  exists, and there is an exact sequence, 
	$$I/I^2\stackrel{\widetilde{d}}{\longrightarrow} \widetilde{\Omega}_{\widetilde{R}/k}/I\widetilde{\Omega}_{\widetilde{R}/k} \longrightarrow  
	\widetilde{\Omega}_{\widetilde{A}/k}\to 0,$$
	which in turns gives a presentation of $\widetilde{\Omega}_{\widetilde{A}/k}$.  
	By \cite[Appendix D]{Kunz}, the Fitting ideals of $\widetilde{\Omega}_{\widetilde{A}/k}$ are defined in the same way as those of ${\Omega}_{{A}/k}$. Hence we can define the ideals $J_{\ell}(\widetilde{A})$ as the $(n-\ell)$-Fitting ideal of $\widetilde{\Omega}_{\widetilde{A}/k}$ 
	in the same way as we did for $\Omega_{A/k}$. 
\end{nota}

	\section{Jacobians and  integrability}

\color{black} In this section we will give some sufficient conditions for a derivation  to be $\infty$-integrable in the case of certain classes of  $k$-algebras  of finite type, $A=k[x_1,\ldots, x_n]/I$.  For the first result we do not need to impose extra conditions on $k$;  in exchange, a (strong) condition on the Jacobian ideal of $A$ is required.  For the other two, we need    $k$  to be regular. Our results extend those of  H. Matsumura \cite[Theorem 11, Corollary 1]{mat-intder-I} and  L. Narv\'aez Macarro \cite[Proposition 2.2.4]{Na2} or \cite[Proposition 2.2.1]{Na2C}.    


\medskip

\noindent{\bf The following covers the case of complete intersections:}

\begin{teo}\label{Teo-DerIntInterseccionCompleta} Let  $k$ be a commutative ring with unit, let $R=k[x_1,\ldots, x_n]$, let  $I=\langle f_1,\ldots, f_r\rangle$  and  set $A=R/I$. Let  $\delta\in \Der_k(A)$. If    $\delta(A)\subset J_r(A)$, then  $\delta\in \IDer_k(A)$. In  particular, $J_r(A)\Der_k(A)\subset \IDer_k(A)$. 
\end{teo}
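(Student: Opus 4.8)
The plan is to reduce the integrability of a given $\delta \in \Der_k(A)$ with $\delta(A) \subset J_r(A)$ to the solvability of an explicit system of linear equations over $A$, and then to solve that system using the hypothesis on the Fitting ideal together with Cramer-type identities. First I would lift the situation to $R = k[x_1,\ldots,x_n]$: a $k$-derivation $\delta$ of $A = R/I$ with $I = \langle f_1,\ldots,f_r\rangle$ lifts to a $k$-derivation $\widetilde\delta$ of $R$ that is $I$-logarithmic (i.e. $\widetilde\delta(I) \subseteq I$). Since $R$ is $0$-smooth over $k$, $\widetilde\delta$ extends to some HS-derivation $\widetilde D \in \HS_k(R)$; the issue is whether we can arrange an $I$-logarithmic HS-derivation lifting $\delta$, for then its reduction modulo $I$ is the desired $\infty$-integral. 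The standard mechanism (as in \cite{Na2}, \cite{mat-intder-I}) is to build $D$ term by term: given a truncation $D^{(m)} \in \HS_k(\log I;m)$ reducing to an $m$-integral of $\delta$, the obstruction to extending it to length $m+1$ while staying $I$-logarithmic is measured by requiring certain elements $D^{(m)}_{m+1}(f_j) \in R$ (for a candidate choice of $D_{m+1}$) to lie in $I$; modifying $D_{m+1}$ by an arbitrary $k$-derivation $\varepsilon$ of $R$ changes these by $\varepsilon(f_j)$, so the problem becomes: given $(c_1,\ldots,c_r) \in R^r$ (the "error" terms, which one checks lie in $I$ modulo lower data), find $\varepsilon \in \Der_k(R)$ with $\varepsilon(f_j) \equiv c_j \pmod{I}$ for all $j$, i.e. solve $J(f_1,\ldots,f_r)\cdot v \equiv c \pmod I$ for $v = (\varepsilon(x_1),\ldots,\varepsilon(x_n))^t$.

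Next I would solve this linear system over $A = R/I$. The matrix is the $r \times n$ Jacobian $J := J(f_1,\ldots,f_r)$, and $J_r(A)$ is exactly the ideal generated by its $r \times r$ minors (this is the presentation of $\Omega_{A/k}$ coming from the conormal sequence, since $I$ is generated by $r$ elements). The key point is that the error vector $c$ can be taken to lie in $J_r(A)\cdot A^r$ — this is where the hypothesis $\delta(A) \subset J_r(A)$ is used, propagated through the inductive construction — and a system $Jv = c$ with $c$ in the column-span-ideal generated by the maximal minors is solvable by an explicit adjugate/Cramer formula: if $c = \sum_{|C|=r} m_C \, b_C$ where $m_C$ is the minor on column set $C$, then for each $C$ one writes a solution supported on the columns of $C$ using $\adj$ of the corresponding $r\times r$ submatrix, and adds these up. Concretely, $(\adj M_C) c_C$ solved against $M_C$ gives $m_C$ times the desired vector, and multiplying the "target" by $m_C$ and dividing is precisely what the membership $c \in J_r(A)A^r$ lets us avoid — one instead produces an honest solution $v \in A^n$ directly. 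I would verify that this solvability is uniform enough to run the induction (the new truncation again reduces to an integral of $\delta$, is again $I$-logarithmic, and the new error term again lies in $J_r A^r$), and then pass to the limit using $\HS_k(A) = \varprojlim_m \HS_k(A;m)$ from \eqref{eq:limit-finite}.

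The last sentence, $J_r(A)\Der_k(A) \subset \IDer_k(A)$, is then immediate: for any $\eta \in \Der_k(A)$ and $a \in J_r(A)$, the derivation $a\eta$ satisfies $(a\eta)(A) = a\cdot\eta(A) \subseteq J_r(A)$, so the main statement applies. The main obstacle I expect is the bookkeeping in the inductive step: one must show that the "error" $D^{(m)}_{m+1}(f_j)$ (before correction) genuinely lands in $J_r(A)A^r$ and not merely in $I$, which requires tracking how the hypothesis $\delta(A)\subset J_r(A)$ forces each successive $D_\alpha(f_j)$ into the Fitting ideal — the Leibniz expansion of $D_{m+1}(f_j)$ mixes lower $D_i$'s applied to things, and one needs that these combinations remain in the ideal; organizing this cleanly (perhaps by working with the $k$-algebra map $\varphi_D: A \to A\lbr t\rbr_m$ and the condition $\varphi_D(J_r) \subseteq J_r A\lbr t\rbr_m$, which the $I$-logarithmic HS-derivations automatically satisfy) is the technical heart, and is also where one sees why the complete-intersection hypothesis (exactly $r$ generators, matching the codimension relevant to $J_r$) is what makes the adjugate formula close up.
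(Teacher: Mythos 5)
Your overall strategy is the paper's: build the integral coefficient by coefficient, reduce each step to the linear system $J(f_1,\ldots,f_r)\,\xi_\nu=-F$ over $A$, and solve it with an adjugate/Cramer identity using that $J_r(A)$ is generated by the $r$-minors of the Jacobian. However, the inductive invariant you propose --- ``the error vector lies in $J_r(A)\cdot A^r$, and one produces an honest solution $v\in A^n$'' --- does not close, and this is precisely the point you flag as ``bookkeeping'' without resolving it. If at stage $\nu$ you only know $F\in J_r A^r$, then writing $F_\alpha=\sum_C m_C (b_C)_\alpha$ with $(b_C)_\alpha\in A$ and applying $\adj(M_C)$ gives a solution whose entries lie merely in $A$. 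At the next stage the new error is an $A$-combination of monomials $\xi_{\mu_1 j_1}\cdots\xi_{\mu_q j_q}$ with $\mu_1+\cdots+\mu_q=\nu+1$ and each $\mu_i\le\nu$; for $\nu+1\ge 4$ this includes monomials such as $\xi_{2 j_1}\xi_{2 j_2}$ containing no factor from the first-order data, so if the $\xi_{\mu i}$ with $\mu\ge 2$ are not themselves in $J_r$, the new error need not lie in $J_r A^r$ at all. So the invariant you chose is not self-propagating.

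The paper's proof repairs exactly this by strengthening the induction hypothesis: one requires $\xi_{\mu i}=D_\mu(x_i)\in J_r$ for \emph{all} $\mu\ge 1$ (not just control of the error). Then, since every monomial contributing to $F_\alpha$ has at least two factors $\xi_{\mu i}$, the error automatically lies in $J_r^2$ (the ``Key point'' in \S\ref{setting_notation}). Writing $F_\alpha=\sum_\lambda \Delta_\lambda h_{\alpha\lambda}$ with $h_{\alpha\lambda}\in J_r$ spends one factor of $J_r$ on the minors $\Delta_\lambda$ and keeps the other in the right-hand side; Lemma \ref{Lema-SistemaSolucion} (whose ``Moreover'' clause is the essential ingredient you are missing) then produces, for each $\lambda$, a solution of $J\bfx=\Delta_\lambda h_{\sbullet\lambda}$ with entries in $\langle h_{1\lambda},\ldots,h_{r\lambda}\rangle\subset J_r$, and summing over $\lambda$ restores the invariant. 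With that modification your argument becomes the paper's proof; the rest of your outline (the passage to the limit via (\ref{eq:limit-finite}) and the derivation of $J_r(A)\Der_k(A)\subset\IDer_k(A)$ from the main statement) is correct as written.
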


\noindent{\bf Equidimensional $k$-algebras:}

\begin{teo}\label{Teo-EQUIDIMENSIONALJACOBIANO} Let  $k$  be  a regular ring, set   $R=k[x_1,\ldots, x_n]$ and let  $I\subset R$ be a radical ideal. Suppose  $A=R/I$ is  an equidimensional ring of codimension  $r$   satisfying the condition  $J^{\het}$. Let  $\delta\in \Der_k(A)$ and let  $\Delta\in J_r(A)$ be a non-zero divisor in  $A$. Then, $\Delta \delta\in \IDer_k(A)$. Moreover, there is an  $\infty$--integral $E=(E_\mu)\in \HS_k(A)$ of  $\Delta\delta$ such that   $E_\mu(x_i)\in \langle \Delta\rangle$, for all  $\mu\geq 1$, for all $i=1,\ldots, n$.    As a consequence $J_r(A)\Der_k(A)\subset \IDer_k(A)$. 
\end{teo}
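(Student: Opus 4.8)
\textbf{Proof plan for Theorem~\ref{Teo-EQUIDIMENSIONALJACOBIANO}.}
The plan is to adapt the proof of \cite[Theorem~11]{mat-intder-I}, replacing the fraction field of the domain there by the total ring of fractions $A':=S^{-1}A$, with $S:=R\setminus\bigcup_{P\in\Min(I)}P$; since $I$ is radical, $A'$ is a finite product of fields and the non--zero divisor $\Delta$ becomes a unit in $A'$. To produce the integral $E$ of $\Delta\delta$ with the asserted properties, I would look for a $k$--algebra homomorphism $\Psi\colon R\to A\lbr t\rbr$ of the form $\Psi(x_i)=\overline{x}_i+\Delta\sum_{\mu\ge 1}b_{i,\mu}t^\mu$, with $b_{i,1}=\delta(\overline{x}_i)$, all $b_{i,\mu}\in A$, and $\Psi(I)=0$. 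Such a $\Psi$ factors through $A$, the associated $E\in\HS_k(A)$ satisfies $E_1=\Delta\delta$ (both are $k$--derivations of $A$ with the same values on the $\overline{x}_i$), and $E_\mu(\overline{x}_i)=\Delta b_{i,\mu}\in\langle\Delta\rangle$ for all $\mu\ge 1$, which is exactly the ``moreover'' part.

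The first step is to invoke Proposition~\ref{Cor-ExisteSistemaGeneradores} --- this is where the hypotheses ``$k$ regular'', ``$A$ reduced and equidimensional of codimension $r$'' and the condition $J^{\het}$ are used --- to fix a system of generators $\mathcal S=\{f_1,\dots,f_s\}$ of $I$ and a subset $\mathcal F=\{f_1,\dots,f_r\}\subset\mathcal S$ with $\rank J(\mathcal F)=r$ at $A_P$ for every $P\in\Min(I)$, so that $S^{-1}I=\langle\mathcal F\rangle$. This lets one replace the requirement $\Psi(I)=0$ by the finitely many conditions $\Psi(f_1)=\dots=\Psi(f_r)=0$: any $f\in\mathcal S$ lies in $\langle\mathcal F\rangle$ over $S^{-1}R$, so $\Psi(f)=0$ in $A'\lbr t\rbr$, and since the $b_{i,\mu}$ are taken in $A$ the map $\Psi$ lands in $A\lbr t\rbr\hookrightarrow A'\lbr t\rbr$ (an injection because $A\hookrightarrow A'$), whence $\Psi(f)=0$ already in $A\lbr t\rbr$. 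Expanding each $\Psi(f_j)$, $j\le r$, by the Hasse--Taylor formula (valid over any ring, with $f_j^{[0]}=f_j$ and $f_j^{[e_i]}=\partial_i f_j$) and using $f_j(\overline x)=0$, one gets $\Psi(f_j)=\Delta\cdot(\text{bracket}_j)$, where the contributions of the Hasse derivatives of order $\ge 2$ to the bracket are again divisible by $\Delta$; since $\Delta$ is a non--zero divisor, $\Psi(f_j)=0$ for $j\le r$ is equivalent to a triangular system: for each $\mu\ge 1$, find $(b_{i,\mu})_{i=1}^n\in A^n$ with $J(\mathcal F)\,(b_{i,\mu})_i=w_\mu$ in $A^r$, where $w_\mu\in\Delta A^r$ depends only on the $b_{i,\nu}$ with $\nu<\mu$. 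At $\mu=1$ this holds with $b_{i,1}=\delta(\overline{x}_i)$, because $\sum_i\overline{\partial_i f_j}\,\delta(\overline{x}_i)=0$ is the evaluation on $f_j\in I$ of an $I$--logarithmic lift of $\delta$ to $R$ (which exists because $R$ is a polynomial ring).

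The crux is the inductive step $\mu\ge 2$: one must solve $J(\mathcal F)v=w_\mu$ \emph{over $A$}, not merely over $A'$, with $v\in A^n$, so that $E_\mu(\overline{x}_i)=\Delta b_{i,\mu}$ genuinely lies in $\langle\Delta\rangle$. Here I would write $\Delta$ as an $A$--linear combination of $r\times r$ minors of the Jacobian matrix (the minors generating $J_r(A)$) and use Cramer's rule in the form $M\cdot\adj(M)=(\det M)\,\Id$: for a maximal minor $m=\det M_C$ of $J(\mathcal F)$ on a column set $C$, the vector of $A^n$ supported on $C$ with $C$--coordinates obtained by applying $\adj(M_C)$ to the appropriate right--hand side solves $J(\mathcal F)v=m\,(\cdots)$ over $A$; summing such solutions with the coefficients of $\Delta$, and exploiting that $w_\mu$ is itself a multiple of $\Delta$, should yield a solution in $A^n$. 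I expect the genuinely delicate point --- the one that really uses Proposition~\ref{Cor-ExisteSistemaGeneradores} and $J^{\het}$ rather than formal manipulation --- to be the integrality bookkeeping: checking that at every stage the solution can be chosen in $A$ (equivalently, that the cokernel of $J(\mathcal F)\colon A^n\to A^r$ is annihilated by the minors occurring in the chosen expression of $\Delta$), and that the superfluous generators in $\mathcal S\setminus\mathcal F$, which are redundant only after inverting $S$, impose no obstruction over $A$. Once the $b_{i,\mu}$ are built, $\Psi$ descends and gives the desired $E$, proving $\Delta\delta\in\IDer_k(A)$ together with $E_\mu(x_i)\in\langle\Delta\rangle$.

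Finally, $J_r(A)\Der_k(A)\subset\IDer_k(A)$ follows formally. Since $\IDer_k(A)$ is an $A$--submodule of $\Der_k(A)$, the set $\mathfrak b:=\{b\in A:\ b\,\Der_k(A)\subset\IDer_k(A)\}$ is an ideal, and by the preceding part it contains every non--zero divisor of $A$ lying in $J_r(A)$. Because $A$ is reduced and, by the condition $J^{\het}$, $J_r(A)$ is contained in no minimal (hence no associated) prime of $A$, a routine prime--avoidance argument shows that each minor $m$ generating $J_r(A)$ is a difference of two non--zero divisors lying in $J_r(A)$ --- e.g. $m=(\Delta^N+m)-\Delta^N$ for a fixed non--zero divisor $\Delta\in J_r(A)$ and $N$ large enough that $\Delta^N+m$ is again a non--zero divisor (the only exceptional case, $\dim A=0$, being harmless since then $J^{\het}$ forces $A$ to be a finite product of separable field extensions of $k$, so $\IDer_k(A)=\Der_k(A)$). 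Hence $J_r(A)\subset\mathfrak b$, completing the argument.
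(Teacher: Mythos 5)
Your plan follows essentially the same route as the paper's proof: fix, via Proposition~\ref{Cor-ExisteSistemaGeneradores}, generators of $I$ of which $r$ generate $S^{-1}I$; set up the triangular system for the coefficients of the integral; observe that the right-hand side at each stage is divisible by $\Delta$ (the ``Key point'' of \S\ref{setting_notation}); write $\Delta$ as an $A$-combination of the $r$-minors of the Jacobian matrix; and solve over $A$ by the adjugate/Cramer identity, keeping track of the ideal in which the solution lives --- this last step is exactly Lemma~\ref{Lema-SistemaSolucion} in the paper. The only structural difference is cosmetic: you discard the redundant generators of $I$ and verify compatibility through the embedding $A\hookrightarrow S^{-1}A$, whereas the paper keeps all $s$ generators and shows the augmented $s\times(n+1)$ matrix still has rank $r$ over $A$; both work for the same reason, namely that $S$ contains no zero divisors of $A$.

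The one step that fails as literally stated is in your final paragraph: it is \emph{not} true that every $m\in J_r(A)$ is a difference of two non-zero divisors of $A$ lying in $J_r(A)$. For $A=\FF_2[x]/\langle x^2+x\rangle\cong\FF_2\times\FF_2$ one has $J_1(A)=A$, the only non-zero divisor is $(1,1)$, so the only difference of two non-zero divisors is $0$, yet $\overline{x}=(0,1)\in J_1(A)$; likewise $\Delta^N+m$ can lie in a minimal prime $P$ for every $N$ (take $\Delta\equiv 1$ and $m\equiv 1 \bmod P$ in characteristic $2$). The conclusion survives, but you should argue as the paper does: by prime avoidance (choosing $c$ inside the minimal primes not containing $m$ and outside those containing $m$) the element $m+c\Delta$ is a non-zero divisor in $J_r(A)$, so $J_r(A)$ is generated by non-zero divisors; alternatively, since $\IDer_k(A)$ is an $A$-submodule of $\Der_k(A)$, write $m\delta=(m+c\Delta)\delta-c\,(\Delta\delta)$ with both terms $\infty$-integrable by the first part. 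With that repair the argument is complete.
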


\noindent{\bf Reduced $k$-algebras:}

\begin{teo}\label{Teo-CasoNoEquidimensional}
Let $k$ be a regular ring,  set $R=k[x_1,\ldots, x_n]$ and let  $I\subset R$ be a radical ideal with  $r=\max \{\het(P)\ | \ P\in \Min(I)\}$.  Let  $A=R/I$ and  let  $J_r^0(A)$ be  some lifting of $J_r(A)$ in  $R$.  Then, $(J_r^0(A)+I)\Der_k(\log I)\subset \IDer_k(\log I)$. As a consequence, $J_r(A)\Der_k(A)\subset \IDer_k(A)$. 
\end{teo}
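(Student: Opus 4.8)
The plan is to exploit that $\IDer_k(\log I)$ is an $R$-submodule of $\Der_k(\log I)$: it is closed under sums because $\HS_k(\log I)$ is a group with $(D\circ D')_1=D_1+D'_1$, and closed under multiplication by $g\in R$ via the substitution $t\mapsto gt$, which carries $D\in\HS_k(\log I;m)$ to $g\ast D\in\HS_k(\log I;m)$ with $(g\ast D)_\mu=g^\mu D_\mu$. Hence, fixing $\delta\in\Der_k(\log I)$, the set $N_\delta:=\{b\in R\mid b\delta\in\IDer_k(\log I)\}$ is an ideal of $R$, and it suffices to prove $I\subseteq N_\delta$ and $J_r^0(A)\subseteq N_\delta$. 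The first inclusion is a special case of $I\,\Der_k(R)\subseteq\IDer_k(\log I)$: given $f\in I$ and $\eta\in\Der_k(R)$, choose an $\infty$-integral $E\in\HS_k(R)$ of $\eta$ (possible since $R$ is $0$-smooth over $k$, \cite[Theorem~27.1]{Ma}); then $f\ast E\in\HS_k(\log I)$ because $(f\ast E)_\mu(I)=f^\mu E_\mu(I)\subseteq f^\mu R\subseteq I$ for $\mu\ge1$, and $(f\ast E)_1=f\eta$. This inclusion will be reused below.

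For $J_r^0(A)\subseteq N_\delta$ I would split $\Min(I)=\Lambda_{\mathrm{top}}\sqcup\Lambda_{\mathrm{rest}}$, where $\Lambda_{\mathrm{top}}=\{P\in\Min(I)\mid J_r(A)\not\subseteq P\}$, and set $I_{\mathrm{top}}=\bigcap_{P\in\Lambda_{\mathrm{top}}}P$, $I_{\mathrm{rest}}=\bigcap_{P\in\Lambda_{\mathrm{rest}}}P$, so $I=I_{\mathrm{top}}\cap I_{\mathrm{rest}}$ and $J_r^0(A)\subseteq I_{\mathrm{rest}}$ by construction (if $\Lambda_{\mathrm{top}}=\emptyset$ then $J_r^0(A)\subseteq I$ and we are done). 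Every $P\in\Lambda_{\mathrm{top}}$ has $\het(P)=r$: since $A_P=\kappa(P)$ is a field, the inequality $\dim_{\kappa(P)}\Omega_{\kappa(P)/k}\ge n-\het(P)$, valid for primes of $k[x_1,\dots,x_n]$ over the regular ring $k$, together with $J_r(A)_P\ne0$ (whence $\dim_{\kappa(P)}\Omega_{\kappa(P)/k}\le n-r$) gives $\het(P)\ge r$, and $\het(P)\le r$ always. Therefore $R/I_{\mathrm{top}}$ is reduced and equidimensional of codimension $r$; it satisfies the condition $J^{\het}$ because $J_r(R/I_{\mathrm{top}})\supseteq J_r(A)\,(R/I_{\mathrm{top}})$ (conormal surjection and base change of Fitting ideals) and $J_r(A)_P=A_P=(R/I_{\mathrm{top}})_P$ at each of its minimal primes $P$. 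Finally, since $A$ is reduced, Lemma~\ref{Lem-AsociadosDerivaciones} (with $m=1$) and Lemma~\ref{Lem-HSInterseccionIdeales} show that the image of $\delta$ in $\Der_k(A)$ is logarithmic along $I_{\mathrm{top}}/I$ and $I_{\mathrm{rest}}/I$; lifting, $\delta(I_{\mathrm{top}})\subseteq I_{\mathrm{top}}$ and $\delta(I_{\mathrm{rest}})\subseteq I_{\mathrm{rest}}$, so $\delta$ induces $\delta_{\mathrm{top}}\in\Der_k(R/I_{\mathrm{top}})$.

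The core step is this. Let $z\in J_r^0(A)$ be such that its class $\bar z$ is a non-zero-divisor of $R/I_{\mathrm{top}}$; then $\bar z\in J_r(R/I_{\mathrm{top}})$, and Theorem~\ref{Teo-EQUIDIMENSIONALJACOBIANO} applied to $R/I_{\mathrm{top}}$, $\delta_{\mathrm{top}}$ and $\bar z$ produces $E\in\HS_k(R/I_{\mathrm{top}})$ with $E_1=\bar z\,\delta_{\mathrm{top}}$ and $E_\mu(\bar x_i)\in\langle\bar z\rangle$ for all $\mu\ge1$. Writing $E_\mu(\bar x_i)=\overline{z\,c_{\mu,i}}$ with $c_{\mu,i}\in R$, let $\widetilde E$ be the HS-derivation of $R$ over $k$ determined by $\widetilde E_\mu(x_i):=z\,c_{\mu,i}$. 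Then $\widetilde E$ is logarithmic for $I_{\mathrm{top}}$, because the composite of $\varphi_{\widetilde E}\colon R\to R\lbr t\rbr$ with the reduction $R\lbr t\rbr\to(R/I_{\mathrm{top}})\lbr t\rbr$ agrees on the $x_i$ with $\varphi_E\circ\pi$ ($\pi\colon R\to R/I_{\mathrm{top}}$ the projection), hence equals it and so annihilates $I_{\mathrm{top}}$; and it is logarithmic for $I_{\mathrm{rest}}$, because $z\in J_r^0(A)\subseteq I_{\mathrm{rest}}$ forces $\widetilde E_\mu(x_i)\in zR\subseteq I_{\mathrm{rest}}$ for $\mu\ge1$, so $\varphi_{\widetilde E}$ reduces modulo $I_{\mathrm{rest}}$ to the inclusion $R\to(R/I_{\mathrm{rest}})\lbr t\rbr$. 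By Lemma~\ref{Lem-HSInterseccionIdeales}, $\widetilde E\in\HS_k(\log I)$. Moreover $\widetilde E_1(x_i)=z\,c_{1,i}$ satisfies $\overline{z\,c_{1,i}}=\overline{z\,\delta(x_i)}$ in $R/I_{\mathrm{top}}$ while both $z\,c_{1,i}$ and $z\,\delta(x_i)$ lie in $zR\subseteq I_{\mathrm{rest}}$, so $\epsilon:=\widetilde E_1-z\delta$ has $\epsilon(x_i)\in I_{\mathrm{top}}\cap I_{\mathrm{rest}}=I$; hence $\epsilon\in I\,\Der_k(R)\subseteq\IDer_k(\log I)$, and $z\delta=\widetilde E_1-\epsilon\in\IDer_k(\log I)$, i.e.\ $z\in N_\delta$. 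A routine prime-avoidance argument — using that $J_r(A)\,(R/I_{\mathrm{top}})$ lies in no minimal prime of $R/I_{\mathrm{top}}$ — shows that the elements of $J_r^0(A)$ that are non-zero-divisors modulo $I_{\mathrm{top}}$ generate $J_r^0(A)$ as an ideal, whence $J_r^0(A)\subseteq N_\delta$. Together with $I\subseteq N_\delta$ this yields $(J_r^0(A)+I)\Der_k(\log I)\subseteq\IDer_k(\log I)$, and the final assertion follows by lifting: for $\bar a\in J_r(A)$ and $\bar\eta\in\Der_k(A)$, choose $a\in J_r^0(A)$ over $\bar a$ and $\eta\in\Der_k(\log I)$ over $\bar\eta$; then $a\eta\in(J_r^0(A)+I)\Der_k(\log I)\subseteq\IDer_k(\log I)$ maps onto $\bar a\bar\eta\in\IDer_k(A)$.

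The step I expect to be the main obstacle is producing a \emph{single} HS-derivation that is logarithmic for $I$ itself, as opposed to being logarithmic, separately, for $I_{\mathrm{top}}$ and for $I_{\mathrm{rest}}$: it is precisely the divisibility $\widetilde E_\mu(x_i)\in zR$ — inherited from the clause $E_\mu(x_i)\in\langle\Delta\rangle$ of Theorem~\ref{Teo-EQUIDIMENSIONALJACOBIANO} — that makes $\widetilde E$ automatically $I_{\mathrm{rest}}$-logarithmic and lets Lemma~\ref{Lem-HSInterseccionIdeales} be applied. A secondary technical point is the bookkeeping around $J_r(A)$ (namely $\het(P)=r$ for $P\in\Lambda_{\mathrm{top}}$ and $J_r^0(A)\subseteq I_{\mathrm{rest}}$), which rests on the inequality $\dim_{\kappa(P)}\Omega_{\kappa(P)/k}\ge n-\het(P)$ over the regular base $k$.
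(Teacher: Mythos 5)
Your argument is correct and follows essentially the same route as the paper: split $\Min(I)$ according to whether the Jacobian element lies in the minimal prime, apply the equidimensional theorem to the top-dimensional part (where the element is a non-zero-divisor), and use the divisibility clause $E_\mu(x_i)\in\langle \Delta\rangle$ together with Lemma~\ref{Lem-HSInterseccionIdeales} to recover logarithmicity along all of $I$, handling the rest via $I\,\Der_k(\log I)\subset \IDer_k(\log I)$. The only organizational difference is that the paper chooses the decomposition per element $\Delta$ (taking $\mathcal P=\{Q\in\Min(I)\mid \Delta\notin Q\}$, so $\Delta$ is automatically a non-zero-divisor modulo $\bigcap_{P\in\mathcal P}P$) and invokes the logarithmic Corollary~\ref{Cor-EquidimansionalLogaritmico} directly, which lets it dispense with your prime-avoidance generation step and the correction term $\epsilon$.
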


To address the proofs of the theorems we will make use of the following setting and notation.  
	
\begin{parr} \label{setting_notation} {\bf A common setting for the proofs of the theorems.} {\rm With the notation of the theorems, let $\delta\in \Der_k(A)$ be a derivation.  What does it mean to find a $\nu$-integral of $\delta$, for $\nu\geq 2$? In the next lines we follow the ideas in  \cite[Theorem 11]{mat-intder-I} (see also \cite[Corollary 2.1.3]{Na2}). 

\medskip

To start with,  our datum is $\delta$. Thus  $\delta(x_i)=\xi_{1i}$ for some  $=\xi_{1i}\in A$. In our case, under the hypotheses of the theorems, $\xi_{1i}\in \Ma$, where $\Ma$ is a suitable given ideal, for $i=1,\ldots, n$, and this will play a role in our arguments. Suppose, by induction, that we have been able to find a $(\nu-1)$-integral of $\delta$, i.e. a morphism of $k$-algebras:  
\begin{equation}\label{Eq-Integral}
	\varphi:x_i\in A\to x_i+\sum_{\mu=1}^{\nu-1} t^\mu \xi_{\mu i}\in A\lbr t\rbr_{\nu-1}, \ \mbox{ with } \xi_{\mu i}\in \Ma, \ \forall \mu=1,\ldots, \nu-1, \ \forall i=1,\ldots,n.
\end{equation} 
Notice that this also induces a morphism of $k$-algebras on $k[x_1,\ldots, x_n]$. In particular, for each $f_{\alpha}\in I\subset k[x_1,\ldots, x_n]$,
writing  $\xi_{\mu}:=(\xi_{\mu 1},\ldots, \xi_{\mu n})$, for  $\mu=1,\ldots, \nu-1$, and $x=(x_1,\ldots, x_n)$, we have that, in $A\lbr t\rbr_{\nu}$, 
$$f_{\alpha}\left(x+\sum_{\mu=1}^{\nu-1}t^{\mu}\xi_{\mu}\right)\equiv t^{\nu}F_{\alpha}(x)\ \ \text{mod } t^{\nu+1}.$$

\medskip

\begin{quote} {\bf Key point.} 
{\em Observe that  $F_\alpha$ is a linear combination over $A$ of monomials of the form   $\xi_{\mu_1 j_1}\xi_{\mu_2 j_2}\cdots \xi_{\mu_q j_q}$ where $\mu_1+\ldots+\mu_q=\nu$.  Since $\mu_i< \nu$, it follows that  $q\geq 2$. So, $F_\alpha\in \Ma^2$.}
\end{quote}

\medskip 

For  $\xi_{\nu 1},\ldots, \xi_{\nu n}\in A$   set $\xi_\nu:=(\xi_{\nu 1},\ldots, \xi_{\nu n})$. Then  
$$
f_{\alpha}\left(x+\sum_{\mu=1}^{\nu}t^{\mu}\xi_{\mu}\right)\equiv t^{\nu}\left[F_{\alpha}(x)+\sum_{j=1}^n\partial_j \left(f_{\alpha}\right)\xi_{\nu j}\right] \ \ \text{mod } t^{\nu +1}.
$$

Thus, to  find a $\nu$-integral of $\delta$ it suffices to find $\xi_\nu:=(\xi_{\nu 1},\ldots, \xi_{\nu n})$ so that 
\begin{equation}
	\label{eq_4_1}
	F_{\alpha}+\sum_{j=1}^n\partial_j \left(f_{\alpha}\right)\xi_{\nu j}=0 \ \in  A, 
\end{equation}
for all $f_\alpha\in I$. In other words, we need to find a solution  $\xi_\nu=(\xi_{\nu 1}, \ldots, \xi_{\nu n})$ to the linear system whose augmented matrix is:  
\begin{equation}
	\label{sistema_lineal_general}
	\left(\begin{array}{ccc|c} 
		\partial_1 f_1 & \ldots & \partial_n f_1 & -F_1(x)\\
		\vdots & \ldots & \vdots & \vdots  \\
		\partial_1 f_s & \ldots & \partial_n f_s & -F_{s}(x)
	\end{array}\right),
\end{equation}
where $\langle f_1,\ldots, f_s\rangle =I$. We will see that the assumptions of the theorems guarantee the existence of a solution and that, furthermore, this solution  can be chosen such that $\xi_{\nu j}\in \Ma$ for all $j=1,\ldots, n$.   We will make use of the following lemma. }
\end{parr}

		\begin{lem}\label{Lema-SistemaSolucion}
	Let $B$ a commutative ring with unit, and let  $M\in \M_{s\times n}(B)$ and  $\bfb\in \M_{s\times 1}(B)$ be two  matrices. Suppose that  $\rank(M|\bfb)=r$ and let  $\Delta\in B$ be an  $r$-minor of $M$.  Then the system   $M\bfx=\Delta\bfb $ has a solution in $B$. Moreover,  there is a solution  $\xi=(\xi_1,\ldots, \xi_n)\in B^n$ so that  $\xi_i\in \langle b_1,\ldots, b_s\rangle$  for  $i=1,\ldots, n$. 
	\end{lem}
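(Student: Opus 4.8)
The plan is to reduce to the case where $\Delta$ is the leading principal $r$-minor of $M$ and then exhibit a solution explicitly by an adjugate (Cramer-type) formula. First I would note that permuting the rows of $M$ together with the corresponding entries of $\mathbf{b}$, and permuting the columns of $M$ together with the corresponding entries of $\mathbf{x}$, affects neither the hypothesis (the quantity $\rank(M|\mathbf{b})$ is unchanged) nor the conclusion (solvability of $M\mathbf{x}=\Delta\mathbf{b}$, and the condition that each coordinate of the solution lies in $\langle b_1,\ldots,b_s\rangle$, are both invariant under such permutations). Hence we may assume $\Delta=\det M'$, where $M'\in\M_{r\times r}(B)$ is the submatrix of $M$ on the first $r$ rows and first $r$ columns. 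The degenerate situations $r=0$, $\Delta=0$, or $\mathbf{b}=0$ are disposed of immediately by taking $\xi=0$, so from now on $r\ge 1$.

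Next I would write down the candidate solution. Let $\adj(M')$ be the adjugate of $M'$, so that $M'\adj(M')=\adj(M')M'=\Delta\,I_r$, and set $\mathbf{b}':=(b_1,\ldots,b_r)^{T}$. Define $\xi=(\xi_1,\ldots,\xi_n)$ by $(\xi_1,\ldots,\xi_r)^{T}:=\adj(M')\,\mathbf{b}'$ and $\xi_{r+1}=\cdots=\xi_n=0$. Each entry of $\adj(M')\,\mathbf{b}'$ is a $B$-linear combination of $b_1,\ldots,b_r$, so $\xi_i\in\langle b_1,\ldots,b_s\rangle$ for every $i$; this settles the ``moreover'' part as soon as we verify that $\xi$ solves the system. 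For the first $r$ equations of $M\mathbf{x}=\Delta\mathbf{b}$ this is automatic: the $i$-th entry of $M\xi$ with $i\le r$ is the $i$-th entry of $M'\adj(M')\,\mathbf{b}'=\Delta\,\mathbf{b}'$, i.e. $\Delta b_i$.

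The content of the lemma lies in the remaining equations, where the rank hypothesis is used. For $i>r$, the $i$-th entry of $M\xi$ equals $\sum_{j=1}^{r}M_{ij}\xi_j=\sum_{l=1}^{r}\bigl(\sum_{j=1}^{r}M_{ij}(\adj M')_{jl}\bigr)b_l$, and by the cofactor expansion along a row, $\sum_{j}M_{ij}(\adj M')_{jl}$ is the determinant of the $r\times r$ matrix obtained from $M'$ by replacing its $l$-th row with $(M_{i1},\ldots,M_{ir})$. To evaluate $\sum_{l}(\text{that determinant})\,b_l$, I would introduce the $(r+1)\times(r+1)$ matrix $N$ built from rows $1,\ldots,r,i$ and columns $1,\ldots,r$ of $M$, augmented by the column $(b_1,\ldots,b_r,b_i)^{T}$. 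Since $N$ is a submatrix of $(M|\mathbf{b})$ and $\rank(M|\mathbf{b})=r$, every $(r+1)$-minor of $(M|\mathbf{b})$ vanishes, so $\det N=0$. Expanding $\det N$ along its last column and comparing cofactors gives precisely $\sum_{l}(\text{that determinant})\,b_l=\Delta b_i$, which is the $i$-th equation; letting $i$ run over $r+1,\ldots,s$ finishes the verification that $\xi$ solves $M\mathbf{x}=\Delta\mathbf{b}$. I expect the only delicate point to be the sign bookkeeping in this final comparison (the sign picked up when a row of $N$ is moved into position $l$), but this is purely formal: all the actual mathematical content is carried by the single identity $\det N=0$.
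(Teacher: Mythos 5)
Your proof is correct and is essentially the paper's own argument: the paper also reduces to the leading principal $r\times r$ block, forms the same bordered $(r+1)\times(r+1)$ matrices (your $N$, its $M_i$), and reads the solution off from the vanishing of their determinants via cofactor expansion along the last row --- indeed your candidate $\adj(M')\,\mathbf{b}'$ coincides, up to sign, with the paper's vector of cofactors $\mathrm{Cof}((M_i)_{r+1,j})$. The only cosmetic difference is that you treat the equations $i\le r$ and $i>r$ separately, while the paper handles all $i$ uniformly since $|M_i|=0$ holds trivially (repeated row) when $i\le r$.
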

	
	\begin{proof} To ease the notation let us assume that the submatrix defined by   $\Delta$ is given by the first $r$ rows and the first $r$ columns of $M$. For each  $i=1,\ldots, s$, define   $M_i\in \M_{(r+1)\times (r+1)}(A)$ as follows: 
		$$
		M_i=\left(\begin{array}{cccccccccc}
			m_{11} &\ldots&m_{1r}&b_1\\
			\vdots & \ldots& \vdots\\
			m_{r1}&\cdots&m_{rr}&b_r\\
			m_{i1}&\cdots&m_{ir}&b_i
		\end{array}
		\right).
		$$
Notice that for     $j=1,\ldots, r$, the cofactors corresponding to the last row are so that 
  $$\text{Cof} ((M_i)_{r+1,j})=\text{Cof}((M_{i'})_{r+1,j})$$ for all  $i\neq i'$. In addition,   $\text{Cof} ((M_i)_{r+1,r+1})=\Delta$  for all $i$. Set  $\xi_j=\text{Cof}((M_i)_{r+1,j})\in \langle b_1,\ldots, b_r\rangle\subset \langle b_1,\ldots, b_n\rangle$ for  $j=1,\ldots, r$.  Since  $|M_i|=0$ for  $i=1, \ldots, s$, we have that 
		$$
		0=|M_i|=m_{i1}\xi_1+\ldots+m_{ir}\xi_r+b_i\Delta. 
		$$
		Hence,  $\xi=(-\xi_1,\ldots, -\xi_r,0, \ldots,0)$ is a solution with the properties stated in the lemma (here we assign zero to  the variables corresponding  to the columns not appearing in  $\Delta$). 
	\end{proof}

\medskip

\begin{proof1}{Theorem}{\ref{Teo-DerIntInterseccionCompleta}} If $J_r:=J_r(A)=0$, then $\delta=0$ and thus  $\infty$-integrable. Otherwise, $\rank J(f_1,\ldots, f_r)=r$. The hypotesis on $\delta$ implies that $\delta(x_i)=\xi_{1i}\in J_r$ for all $i=1,\ldots, n$. Suppose by induction that we have a $(\nu-1)$-integral of $\delta$ as (\ref{Eq-Integral}) where $\Ma=J_r$. Notice that in this case, $s=r$ in (\ref{sistema_lineal_general}) and $F_\alpha \in J_r^2$ (see the Key point in \S \ref{setting_notation}). Let   $\Delta_1,\ldots, \Delta_a$  be the non-zero $r$-minors of $J(f_1,\ldots, f_r)$. Since  $F_\alpha\in J_r^2$,  
		\begin{equation}
			\label{F_i_menor_igual_r}
			F_\alpha=\sum_{\lambda}\Delta_{\lambda} h_{\alpha\lambda}, \ \ h_{\alpha\lambda}\in J_r\subset A\ \ (1\leq \alpha \leq r). 
		\end{equation}
For  $1\leq \lambda\leq a$, the following matrix has rank $r$, 
		$$
		\left( \begin{array}{ccc|c}
			\partial_1 f_1& \ldots & \partial_n f_1 & h_{1\lambda}\\
			\vdots & \vdots & \vdots & \vdots\\
			\partial_1 f_r& \ldots & \partial_n f_r &  h_{r\lambda}\\
		\end{array}
		\right).$$
	  Hence, by Lemma \ref{Lema-SistemaSolucion}, the system 
		$$\left( \begin{array}{ccc}
			\partial_1 f_1 & \ldots & \partial_n f_1\\
			\vdots & \vdots & \vdots \\
			\partial_1 f_{r}& \ldots &\partial_n f_{r}
		\end{array}
		\right) \left(\begin{array}{c} x_{1}\\ \vdots \\  x_{ n}\end{array}\right)=\Delta_\lambda\left(   \begin{array}{c} h_{1\lambda} \\ \vdots \\ h_{r\lambda}  \end{array}   \right)$$
		has a solution    $\xi^{(\lambda)}_{\nu}=(\xi^{(\lambda)}_{\nu1}, \ldots, \xi^{(\lambda)}_{\nu n})$ for every   $\lambda=1,\ldots, a$, and in addition   $\xi^{(\lambda)}_{\nu j}\in \langle h_{1\lambda}, \ldots, h_{r\lambda}\rangle\subset J_r$.  Therefore  $\xi_\nu:=\sum_{\lambda} -\xi^{(\lambda)}_{\nu}\in (J_r)^n$  is a solution of  (\ref{eq_4_1}) and the result follows from the equality in   (\ref{eq:limit-finite}). 
	\end{proof1}
	
\medskip

\begin{proof1}{Theorem}{\ref{Teo-EQUIDIMENSIONALJACOBIANO}}
Assuming that  the first statement of the theorem holds, the second follows immediately: from the hypotheses, $J_r:=J_r(A)$ is so that $J_rA_{P}\neq 0$ for  $P\in \Min(I)$, and then, since $I$ is a radical ideal, it can be checked that  $J_r$ can be generated by a finite collection of non-zero divisors in $A$, say $J_r=\langle \Delta_1,\ldots, \Delta_a\rangle$. Since we are assuming that the first statement holds,    $\Delta_\lambda\Der_k(A)\subset \IDer_k(A)$  for  $\lambda=1,\ldots, a$, hence $J_r\Der_k(A)\subset \IDer_k(A)$. 
		
		\smallskip
	To prove the first statement of the theorem, let   $\mathcal S=\{f_1,\ldots, f_s\}$ be a collection of generators of    $I$ so that  $S^{-1} I=\langle f_1,\ldots, f_r\rangle$ where  $S=R\setminus \left(\cup_{P\in \Min(I)} P\right)$ 		 (see Proposition  \ref{Cor-ExisteSistemaGeneradores}) and let  $D:=\Delta \delta\in \Der_k(A)$. Hence, $D(x_i)=\xi_{1i}\in \langle \Delta\rangle$ and, from \S\ref{setting_notation}, after an inductive argument, suppose that we have a $(\nu-1)$-integral of $D$ as (\ref{Eq-Integral}) where $\Ma=\langle \Delta\rangle$. 
Observe that  $F_\alpha\in \langle \Delta^2\rangle$, i.e.,
		$
		F_\alpha =\Delta h_\alpha, \ \mbox{ with } h_\alpha\in \langle \Delta\rangle, \ \ (1\leq \alpha \leq s)
		$ (see the Key point in \S\ref{setting_notation}) and recall that  we would like to find a solution  $\xi_{\nu j}\in \langle \Delta\rangle$ ($1\leq j\leq n$) of the system (\ref{sistema_lineal_general}).	
		Since $S^{-1}I=\langle f_1,\ldots, f_r\rangle$, for  $1\leq q\leq s-r$, we have that 
 $$
		f_{r+q}=\sum_{i=1}^r a_{qi} f_i, \ \  a_{qi}\in S^{-1}R,
		$$
	and hence, 
		$$
		\partial_j f_{r+q}=\sum_{i=1}^r a_{qi} \partial_jf_i, \ \ 
		F_{r+q}=\sum_{i=1}^r  a_{qi} F_i, \ \ \mbox{ at } S^{-1}A. 
		$$
Since 		 $\Delta$ is a non-zero divisor, i.e., $\Delta\in S$,  
		$$
		h_{r+q}= \sum_{i=1}^ra_{qi} h_i, \mbox{ at } S^{-1}A.
		$$
	Now, the matrix:
		$$M:=\left(\begin{array}{ccc|c} 
			\partial_1 f_1 & \ldots & 	\partial_n f_1 & h_1\\
			\vdots & \ldots & \vdots & \vdots  \\
			\partial_1 f_s& \ldots & \partial_n f_s & h_{s}
		\end{array}\right)\in \M_{s\times (n+1)}(A)$$
	has rank $r$ at  $S^{-1}A$, and hence, it also has rank $r$ at $A$ (since there are no zero divisors in $S$). On the other hand,  $J_r$ is generated by the non-zero $r$-minors of   $J(f_1,\ldots, f_s)$, say $J_r=\langle \Delta_1,\ldots, \Delta_a\rangle$. Hence,  $\Delta=\gamma_1\Delta_1+\ldots+\gamma_a\Delta_a$ with $\gamma_\lambda\in A$. Now observe  that, for $\lambda=1,\ldots, a$, the following matrix has rank $r$, 
		$$
		\left(\begin{array}{ccc|c} 
			\partial_1 f_1 & \ldots & 	\partial_n f_1 & \gamma_\lambda h_1\\
			\vdots & \ldots & \vdots & \vdots  \\
			\partial_1 f_s& \ldots & \partial_n f_s & \gamma_\lambda h_{s}
		\end{array}\right)\in \M_{s\times (n+1)}(A). 
		$$
		
		Thus, by Lemma \ref{Lema-SistemaSolucion}, for   $\lambda=1,\ldots, a$,  the system
		$$
		J(f_1,\ldots, f_s)\bfx= \Delta_\lambda \gamma_\lambda \left(\begin{array}{c} h_1\\
			\vdots\\
			h_s
		\end{array}\right)
		$$
		has a solution  
 $\xi^{(\lambda)}=(\xi^{(\lambda)}_1,\ldots, \xi^{(\lambda)}_n)$ so that   $\xi^{(\lambda)}_j\in \langle \gamma_\lambda h_1, \ldots, \gamma_\lambda h_s\rangle\subset \langle \Delta\rangle$, for  $j=1,\ldots, n$. Therefore,   $\xi_{\nu}=-\sum_{\lambda=1}^a \xi^{(\lambda)}$ is a solution of the system (\ref{sistema_lineal_general}), i.e.,  we have a  $\nu$-integral of  $\Delta\delta$ given by
$$
\varphi':x_i\in A\to x_i+\sum_{\mu=1}^\nu t^\mu \xi_{\mu i}\in A\lbr t\rbr_{\nu} \ \mbox{ where } \xi_{\mu i}\in \langle \Delta\rangle.
$$
To conclude, by (\ref{eq:limit-finite}), there is an  $\infty$-integral $E\in \HS_k(A;\infty)$ of  $\Delta\delta$ so that  $E_\mu(x_i)=\xi_{\mu i}\in \langle \Delta\rangle$. 
	\end{proof1}

\medskip

Theorem \ref{Teo-EQUIDIMENSIONALJACOBIANO}  can be restated in a logarithmic version. We include it here, since it will be useful to address the proof of Theorem \ref{Teo-CasoNoEquidimensional}. 
	
	\begin{cor}\label{Cor-EquidimansionalLogaritmico}
		Let  $k$ be a regular ring, set $R=k[x_1,\ldots, x_n]$ and let  $I\subset R$ be a radical ideal. Suppose $A=R/I$ is equidimensional of codimension $r$,  and assume   it satisfies the condition  $J^{\het}$. Let  $J_r^0(A)$ be a lifting of   $J_r(A)$ in $R$. Let  $\delta\in \Der_k(\log I)$ and let  $\Delta\in J_r^0(A)$ be a non-zero divisor in   $A$. Then, $\Delta \delta\in \IDer_k(\log I)$. In addition, there exists $E\in \HS_k(\log I)$ an $\infty$-integral of $\Delta\delta$ so that  $E_\mu(x_i)\in \langle \Delta\rangle$ for all $\mu\geq 1$, for all $i=1,\ldots, n$.  As a consequence,  $(J_r^0(A)+I)\Der_k(\log I)\subset \IDer_k(\log I)$. 
	\end{cor}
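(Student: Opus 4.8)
The plan is to derive both ``consequences'' from the first assertion of the corollary --- that $\Delta\delta\in\IDer_k(\log I)$ together with the special $\infty$-integral $E$ --- and to obtain that first assertion by \emph{lifting to $R$} the integral produced by Theorem~\ref{Teo-EQUIDIMENSIONALJACOBIANO}. First I would note that, since $\delta(I)\subseteq I$, the derivation $\delta$ descends to $\bar\delta\in\Der_k(A)$ with $\bar\delta(\bar x_i)=\overline{\delta(x_i)}$, and that $\bar\Delta:=\Delta+I\in J_r(A)$ is a non-zero divisor of $A$ (here I use that $J_r^0(A)$ is a lifting of $J_r(A)$). Since the hypotheses of the corollary are exactly those of Theorem~\ref{Teo-EQUIDIMENSIONALJACOBIANO}, the latter applied to $\bar\delta$ and $\bar\Delta$ provides $E^A=(E^A_\mu)\in\HS_k(A)$ with $E^A_1=\bar\Delta\bar\delta$ and $E^A_\mu(\bar x_i)\in\langle\bar\Delta\rangle$ for all $\mu\geq1$ and all $i$.

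Next I would build the lift. For $\mu\geq2$, writing $E^A_\mu(\bar x_i)=\bar\Delta\,\bar c_{\mu i}$ and choosing any $c_{\mu i}\in R$ over $\bar c_{\mu i}$, set $\xi_{\mu i}:=\Delta c_{\mu i}$; and set $\xi_{1i}:=\Delta\delta(x_i)$. In every case $\xi_{\mu i}\in\langle\Delta\rangle$ and $\xi_{\mu i}+I=E^A_\mu(\bar x_i)$. Let $\varphi:R\to R\lbr t\rbr$ be the $k$-algebra homomorphism with $x_i\mapsto x_i+\sum_{\mu\geq1}\xi_{\mu i}t^\mu$; by the correspondence between HS-derivations and such homomorphisms it equals $\varphi_E$ for a unique $E=(E_\mu)\in\HS_k(R)$ with $E_\mu(x_i)=\xi_{\mu i}\in\langle\Delta\rangle$, and $E_1$ is the derivation of $R$ sending $x_i\mapsto\Delta\delta(x_i)$, hence $E_1=\Delta\delta$.

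The one step that needs care is checking $E\in\HS_k(\log I)$, and for this I would use a commutative square. Let $\pi:R\to A$ be the projection and $\pi_t:R\lbr t\rbr\to A\lbr t\rbr$ its coefficientwise extension, whose kernel is $I\lbr t\rbr$. The $k$-algebra homomorphisms $\pi_t\circ\varphi$ and $\varphi_{E^A}\circ\pi$ from $R$ to $A\lbr t\rbr$ agree on the generators $x_i$ (both send $x_i$ to $\bar x_i+\sum_\mu E^A_\mu(\bar x_i)t^\mu$), hence they coincide; therefore for every $f\in I$ we get $\pi_t(\varphi(f))=\varphi_{E^A}(0)=0$, so $\varphi(f)\in I\lbr t\rbr$, i.e.\ $E_\mu(I)\subseteq I$ for all $\mu$. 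Thus $E$ is the required special integral and $\Delta\delta\in\IDer_k(\log I)$. (Alternatively one could repeat the inductive argument of Theorem~\ref{Teo-EQUIDIMENSIONALJACOBIANO} verbatim inside $R$: at step $\nu$ one must find $\xi_{\nu j}\in\langle\Delta\rangle$ with $F_\alpha+\sum_j\partial_jf_\alpha\,\xi_{\nu j}\in I$, which upon reduction modulo $I$ is exactly the system solved there, after which one lifts the solution back into $\langle\Delta\rangle\subset R$.)

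Finally, for the consequences I would first record the easy fact that any $\eta\in\Der_k(R)$ with $\eta(R)\subseteq I$ lies in $\IDer_k(\log I)$: the homomorphism $x_i\mapsto x_i+t\,\eta(x_i)$ is $I$-logarithmic because $g(x+t\eta(x))-g(x)\in\langle\eta(x_1),\dots,\eta(x_n)\rangle R\lbr t\rbr\subseteq I\lbr t\rbr$ for every $g\in R$; in particular $I\Der_k(\log I)\subseteq\IDer_k(\log I)$. Then, choosing $\Delta_1,\dots,\Delta_a\in J_r^0(A)$ that lift a set of non-zero-divisor generators of $J_r(A)$ (available since $I$ is radical and $J^{\het}$ holds, as in the proof of Theorem~\ref{Teo-EQUIDIMENSIONALJACOBIANO}), one has $J_r^0(A)+I=\langle\Delta_1,\dots,\Delta_a\rangle+I$, and since $\IDer_k(\log I)$ is an $R$-submodule of $\Der_k(R)$ and $\Delta_\lambda\Der_k(\log I)\subseteq\IDer_k(\log I)$ by the first part,
$$(J_r^0(A)+I)\Der_k(\log I)=\sum_{\lambda}\Delta_\lambda\Der_k(\log I)+I\Der_k(\log I)\subseteq\IDer_k(\log I).$$
For the last statement, given $\bar g\in J_r(A)$ and $\bar\eta\in\Der_k(A)$ I would lift $\bar\eta$ to $\eta\in\Der_k(\log I)$ and $\bar g$ to $g\in J_r^0(A)$ (both projections being surjective), so that $g\eta\in(J_r^0(A)+I)\Der_k(\log I)\subseteq\IDer_k(\log I)$ maps to $\bar g\bar\eta$, whence $\bar g\bar\eta\in\IDer_k(A)$ and $J_r(A)\Der_k(A)\subseteq\IDer_k(A)$. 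The main obstacle is really only the lifting step --- guaranteeing that the lifted homomorphism $\varphi$ stays $I$-logarithmic --- everything else being formal consequences of Theorem~\ref{Teo-EQUIDIMENSIONALJACOBIANO}.
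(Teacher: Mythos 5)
Your proposal is correct, but it reaches the corollary by a different route than the paper intends. The paper offers no written proof: the sentence preceding the statement signals that one is meant to rerun the inductive construction of Theorem \ref{Teo-EQUIDIMENSIONALJACOBIANO} verbatim inside $R$, solving the linear system (\ref{sistema_lineal_general}) modulo $I$ at each step so that every truncated integral is $I$-logarithmic from the start (this is exactly the alternative you mention in parentheses). Your main argument instead treats Theorem \ref{Teo-EQUIDIMENSIONALJACOBIANO} as a black box and lifts its output: since $R$ is a polynomial ring, an HS-derivation of $R$ is freely prescribed on the $x_i$, so you may choose $\xi_{1i}=\Delta\delta(x_i)$ and $\xi_{\mu i}=\Delta c_{\mu i}$ over $E^A_\mu(\bar x_i)=\bar\Delta\bar c_{\mu i}$, and the commuting-square argument $\pi_t\circ\varphi=\varphi_{E^A}\circ\pi$ shows the lift is automatically $I$-logarithmic; this is in effect a refinement of the surjectivity $\HS_k(\log I)\to\HS_k(A)$ recorded in Section 2, sharpened to preserve both $E_1=\Delta\delta$ and the divisibility $E_\mu(x_i)\in\langle\Delta\rangle$. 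What your route buys is economy (no repetition of the induction) at the cost of leaning on the polynomial presentation; what the paper's route buys is that it transfers unchanged to $\widetilde R=k\lbr x_1,\ldots,x_n\rbr$ as in Remark \ref{series_integrabilidad}, where "freely prescribing images of the $x_i$" requires the extra continuity observation. Your auxiliary steps --- that $I\Der_k(\log I)\subset\IDer_k(\log I)$ via $x_i\mapsto x_i+t\eta(x_i)$, and the reduction of $(J_r^0(A)+I)\Der_k(\log I)$ to non-zero-divisor generators of $J_r(A)$ --- are sound and match what the paper implicitly uses in the proofs of Theorems \ref{Teo-EQUIDIMENSIONALJACOBIANO} and \ref{Teo-CasoNoEquidimensional}; the final deduction of $J_r(A)\Der_k(A)\subset\IDer_k(A)$ is not part of the corollary's statement but is harmless.
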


\medskip
	
\begin{proof1}{Theorem}{\ref{Teo-CasoNoEquidimensional}} Let  $r:=\max\{\het(P)\ | \ P\in \Min(I)\}$ and let  $J_r^0:=J_r^0(A)$ be a lifting in  $R$ of  $J_r(A)$. Consider $\Delta\in J_r^0$ and  $\delta\in \Der_k(\log I)$. We will show that  $\Delta\delta\in \IDer_k(\log I)$, which will imply that  $J_r^0\Der_k(\log I)\subset \IDer_k(\log I)$. Since  $I\cdot \Der_k(\log I)\subset \IDer_k(\log I)$,  the theorem follows. 
		
	Set $\mathcal P:=\{Q\in \Min(I)\ | \ \Delta\not\in Q\}=\{P_1,\ldots, P_m\}$,  
	   write $I_1=P_1\cap \ldots \cap P_m$ and  $B=R/I_1$. If  $\mathcal P=\emptyset$, then  $\Delta\in I$ and the integrability is immediate.  Otherwise, since   $J_rA_Q=0$  for all  $Q\in \Min(I)$ with  $\het(Q)<r$, we have that  $\het(P_i)=r$ for  $i=1,\ldots, m$. In addition, it can be checked that   $\Delta\in J_r^0(B)$, where  $J_r^0(B)$ is a lifting in   $R$ of  $J_r(B)$, and hence  $J_r(B)B_{P_i}\neq 0$. Thus,  $B$ is  equidimensional and verifies the  condition  $J^{\het}$. By Corollary \ref{Cor-EquidimansionalLogaritmico},  we have that  $(J_r^0(B)+I_1)\Der_k(\log I_1)\subset \IDer_k(\log I_1)$. 
	
	 By Lemma \ref{Lem-AsociadosDerivaciones}, $\Der_k(\log I)\subset \Der_k(\log I_1)$. Hence  $\Delta\delta\in (J_r^0(B)+I_1)\Der_k(\log I_1)\subset \IDer_k(\log I_1)$. Since  $\Delta$ is a non-zero divisor in $B$, by  Corollary \ref{Cor-EquidimansionalLogaritmico}, there is an  $\infty$-integral  $E\in \HS_k(\log I_1)$  of  $\Delta\delta$ such that  $E_{\mu}(x_i)\in \langle \Delta\rangle$. If   $Q\in\Min(I)\setminus \mathcal P$,  $\Delta\in Q$ and hence,  $E\in \HS_k(\log Q)$. By  Lemma \ref{Lem-HSInterseccionIdeales},
		$
		E\in \HS_k(\log I)
		$,
		from where it follows that  $\Delta\delta\in \IDer_k(\log I)$.  It can be checked that the theorem follows from here. 
	\end{proof1}

\begin{nota}\label{series_integrabilidad}  Theorems \ref{Teo-DerIntInterseccionCompleta}, \ref{Teo-EQUIDIMENSIONALJACOBIANO}, \ref{Teo-CasoNoEquidimensional} also hold  for $\widetilde{R}=k\lbr x_1,\ldots, x_n\rbr$ and $\widetilde{A}=\widetilde{R}/I$ with  $I\subset \widetilde{R}$  when keeping the corresponding assumptions on $k$ and  $\widetilde{A}$. This follows from Remark \ref{series_jacobiano} and the fact that in the discussion in \S\ref{setting_notation}, the map 
	$\widetilde{A}\to \widetilde{A}[|t|]$ in (\ref{Eq-Integral}) is continuous with respect to the $\langle x_1,\ldots, x_n\rangle$-adic topology.  
	
\end{nota}

	\section{Rings with a finite number of leaps}

In section \ref{SEC-FinitudSaltosLocales}, we gave a sufficient condition for a local ring to have a finite number of leaps. In this section, we prove the next result:

\begin{teo}\label{Teo-AnillosConSaltosFinitos} Let $k$ be a Noetherian ring containing a field, let $R=k[x_1,\ldots, x_n]$ and let  $I\subset R$ be an  ideal. Set  $A=R/I$ and let $\Mp\in \Spec(A)$ a minimal prime of  $J_r:=J_r(A)$, the    $(n-r)$--Fitting ideal of  $\Omega_{A/k}$.  Suppose at least one of the following conditions hold: 
	\begin{itemize}
		\item[1)] $I=\langle f_1,\ldots, f_r\rangle$; 
		\item[2)] $k$ is regular, $I$ is radical and  $r=\max\{\het(P) \ | \ P\in \Min(I)\}$. 
	\end{itemize}
Then the set $\Leap_k(A_{\Mp})$ is finite of cardinal bounded by $d:=\dim_K (\Der_k(A_\Mp)/\Mp^M \Der_k(A_\Mp))$ where $K$ is the residue field of   $\Mp$,  and $M$ is the smallest positive integer so that  $\Mp^M\Der_k(A_\Mp)\subset \IDer_k(A_\Mp)$.  
\end{teo}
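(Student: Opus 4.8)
The plan is to reduce Theorem \ref{Teo-AnillosConSaltosFinitos} to Theorem \ref{Prop-Saltos finitos}, whose hypothesis requires a maximal ideal $\mm$ of a Noetherian $k$-algebra containing a field with $\mm^N\Der_k(A)\subset\IDer_k(A)$. First I would localize. Under hypothesis 1) Theorem \ref{Teo-DerIntInterseccionCompleta} gives $J_r\Der_k(A)\subset\IDer_k(A)$; under hypothesis 2) Theorem \ref{Teo-CasoNoEquidimensional} gives the same inclusion $J_r\Der_k(A)\subset\IDer_k(A)$. In either case $A$ is a finitely presented Noetherian $k$-algebra and $\Mp$ is a minimal prime of $J_r$, so Lemma \ref{Cor-SJcDer infinito-integrable1} applies and yields an integer $N$ with $\Mp^N\Der_k(A_\Mp)\subset\IDer_k(A_\Mp)$ (explicitly, the least $N$ with $\Mp^N\subset J_rA_\Mp$). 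Note $A_\Mp$ is a Noetherian local $k$-algebra containing a field (it contains the field that $k$, hence $A$, contains), and $\Mp A_\Mp$ is its maximal ideal.

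Next I would invoke Theorem \ref{Prop-Saltos finitos} with $A$ replaced by $A_\Mp$, $\mm=\Mp A_\Mp$ and the $N$ just produced. This immediately gives
\[
\#\Leap_k(A_\Mp)\leq \dim_K\big(\Der_k(A_\Mp)/\Mp^N\Der_k(A_\Mp)\big),
\]
where $K=A_\Mp/\Mp A_\Mp$ is the residue field of $\Mp$. It remains only to match this with the stated bound, which is phrased in terms of $M$, the \emph{smallest} positive integer with $\Mp^M\Der_k(A_\Mp)\subset\IDer_k(A_\Mp)$. Since such an $M$ exists (our $N$ is a candidate) and is the smallest, we have $M\le N$, hence $\Mp^N\Der_k(A_\Mp)\subset\Mp^M\Der_k(A_\Mp)$, which gives a surjection $\Der_k(A_\Mp)/\Mp^N\Der_k(A_\Mp)\twoheadrightarrow\Der_k(A_\Mp)/\Mp^M\Der_k(A_\Mp)$ and therefore $\dim_K\big(\Der_k(A_\Mp)/\Mp^M\Der_k(A_\Mp)\big)\le\dim_K\big(\Der_k(A_\Mp)/\Mp^N\Der_k(A_\Mp)\big)$. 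But running Theorem \ref{Prop-Saltos finitos} with the optimal $M$ in place of $N$ gives the sharper bound $\#\Leap_k(A_\Mp)\le\dim_K\big(\Der_k(A_\Mp)/\Mp^M\Der_k(A_\Mp)\big)$, which is exactly the claimed statement; finiteness then follows because $\Der_k(A_\Mp)$ is a finitely generated $A_\Mp$-module (as $A$ is of finite type over the Noetherian ring $k$), so the quotient is a finite-dimensional $K$-vector space.

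The only genuinely delicate points, rather than obstacles, are bookkeeping ones: checking that $A$ (and $A_\Mp$) indeed contains a field so that Theorem \ref{Prop-Saltos finitos} applies — this is immediate since $k$ contains a field and $k\to A\to A_\Mp$ — and checking that in hypothesis 2) the radical ideal $I$ together with $r=\max\{\het(P)\mid P\in\Min(I)\}$ puts us exactly in the situation of Theorem \ref{Teo-CasoNoEquidimensional}, so that $J_r\Der_k(A)\subset\IDer_k(A)$ holds with the \emph{same} $r$ used to define the Fitting ideal $J_r$ in the statement. One also wants to record that localization commutes appropriately with the formation of $\Der$, $\IDer$ and Fitting ideals so that $\Mp$ being a minimal prime of $J_r(A)$ makes $\Mp A_\Mp$ a minimal prime of $J_r(A)A_\Mp$ and Lemma \ref{Cor-SJcDer infinito-integrable1} is literally applicable; these are standard and I would simply cite them. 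I do not expect any step to present a real difficulty once Theorems \ref{Teo-DerIntInterseccionCompleta}, \ref{Teo-CasoNoEquidimensional}, \ref{Prop-Saltos finitos} and Lemma \ref{Cor-SJcDer infinito-integrable1} are in hand — the theorem is essentially the globalization-to-local-ring packaging of those four results.
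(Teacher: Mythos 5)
Your proposal is correct and follows exactly the paper's own argument: Theorems \ref{Teo-DerIntInterseccionCompleta} and \ref{Teo-CasoNoEquidimensional} give $J_r\Der_k(A)\subset \IDer_k(A)$, Lemma \ref{Cor-SJcDer infinito-integrable1} transfers this to $\Mp^N\Der_k(A_\Mp)\subset \IDer_k(A_\Mp)$ after localization, and Theorem \ref{Prop-Saltos finitos} applied to $(A_\Mp,\Mp A_\Mp,K)$ with the optimal exponent $M$ yields the stated bound. The bookkeeping points you flag (containing a field, finite generation of $\Der_k(A_\Mp)$, passing to the smallest $M$) are handled the same way in the paper.
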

	
\begin{proof}
By Theorems  \ref{Teo-DerIntInterseccionCompleta} and  \ref{Teo-CasoNoEquidimensional} respectively,   $J_r\Der_k(A)\subset \IDer_k(A)$. Since  $A$ is Noetherian and finitely presented over $k$,  by  Lemma \ref{Cor-SJcDer infinito-integrable1}, $\Mp^N\Der_k(A_\Mp)\subset \IDer_k(A_\Mp)$,  where  $N$ is the smallest positive integer for which    $\Mp^N\subset J_r A_\Mp$. To conclude,  observe that  $(A_\Mp, \Mp A_{\Mp}, K)$ is under the hypotheses of   Theorem \ref{Prop-Saltos finitos} for some  $M\geq 1$ such that  $\Mp^M\Der_k(A_{\Mp})\subset \IDer_k(A_{\Mp})$, with $\Der_k(A_\Mp)$ a finitely generated $A_\Mp$-module,  from where it follows that $\Leap_k(A_\Mp)$ is finite and of cardinal  less than or equal to  $d$.
	\end{proof}

	\begin{cor}\label{Cor-SingularidadAislada}
		Let  $k$ be a perfect field,  let $R=k[x_1,\ldots,x_n]$ and let  $I\subset R$ be a radical ideal so that  $A=R/I$ is equidimensional of codimension  $r$. Suppose that  $\Sing(A)=\mathbb V(J_r)=\{\mm_1,\ldots,\mm_s\}\subset \Specmax(A)$, where $J_r$ is   the Jacobian ideal of $A$.  Then  $\Leap_k(A)$  is a finite set. 
	\end{cor}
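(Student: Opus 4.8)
The plan is to combine the global-to-local decomposition of leaps (Proposition \ref{Prop-UnionSaltos}) with the finiteness statement at each localized singular point provided by Theorem \ref{Teo-AnillosConSaltosFinitos}. First I would invoke Proposition \ref{Prop-UnionSaltos}: since $k$ is a perfect field and $A$ is a finitely generated $k$-algebra, $\Leap_k(A)=\bigcup_{P\in\Sing(A)}\Leap_k(A_P)$, and by Proposition \ref{Prop-SaltosRegular} the regular points contribute nothing, so in fact $\Leap_k(A)=\bigcup_{i=1}^s\Leap_k(A_{\mm_i})$ where $\mm_1,\dots,\mm_s$ are the finitely many maximal ideals cutting out $\Sing(A)=\mathbb V(J_r)$. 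Thus it suffices to show each $\Leap_k(A_{\mm_i})$ is finite, because a finite union of finite sets is finite.

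Next I would check that each $\mm_i$ satisfies the hypotheses of Theorem \ref{Teo-AnillosConSaltosFinitos}. Write $A=R/I$ with $R=k[x_1,\dots,x_n]$ and $I$ radical; since $A$ is equidimensional of codimension $r$, every minimal prime of $I$ has height $r$, so $r=\max\{\het(P)\mid P\in\Min(I)\}$ and hypothesis (2) of the theorem is met (a perfect field is regular). It remains to observe that each $\mm_i$ is genuinely a minimal prime of $J_r$: by hypothesis $\mathbb V(J_r)=\{\mm_1,\dots,\mm_s\}$ consists of maximal ideals only, so each $\mm_i$ is both minimal and maximal in $\mathbb V(J_r)$, hence a minimal prime of $J_r(A)$. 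Then Theorem \ref{Teo-AnillosConSaltosFinitos} applies with $\mathfrak p=\mm_i$ and gives that $\Leap_k(A_{\mm_i})$ is finite (with an explicit cardinality bound, which we do not need here).

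Putting these together, $\Leap_k(A)=\bigcup_{i=1}^s\Leap_k(A_{\mm_i})$ is a finite union of finite sets, hence finite, which is the assertion. I do not anticipate a serious obstacle: the one point worth a line of care is confirming that the hypothesis $\mathbb V(J_r)\subset\Specmax(A)$ together with equidimensionality really does put us in case (2) of Theorem \ref{Teo-AnillosConSaltosFinitos}, in particular that the $\mm_i$ are minimal over $J_r$ (automatic, since they are maximal and $\mathbb V(J_r)$ is exactly this finite set of maximal ideals) — and that $\Sing(A)=\mathbb V(J_r)$ is precisely the statement that the non-smooth locus is cut out by the Jacobian ideal, which holds over a perfect field. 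Everything else is a direct citation of the already-proven results.
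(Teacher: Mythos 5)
Your proposal is correct and follows exactly the paper's own argument: decompose $\Leap_k(A)$ via Proposition \ref{Prop-UnionSaltos} into the union over the finitely many singular maximal ideals, then apply Theorem \ref{Teo-AnillosConSaltosFinitos} (case (2)) at each $\mm_i$. Your extra verification that each $\mm_i$ is a minimal prime of $J_r$ and that equidimensionality puts you in case (2) is a welcome elaboration of details the paper leaves implicit, but the route is the same.
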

\begin{proof}
By Proposition \ref{Prop-UnionSaltos}, we have that 
$$
\Leap_k(A)=\bigcup_{\Mp\in \Sing(A)} \Leap_k(A_{\Mp})=\Leap_k(A_{\mm_1})\cup \ldots \cup \Leap_k(A_{\mm_s}).
$$
Now Theorem  \ref{Teo-AnillosConSaltosFinitos} guarantees that  $\Leap_k(A_{\mm_i})$ is finite for  $i=1,\ldots, r$.  
\end{proof}
	
\begin{cor} \label{corolario_curvas}
Let   $k$ be  a perfect field and let $A$ be a reduced $k$-algebra of finite type of  dimension 1. Then, $\Leap_k(A)$ is finite. 
\end{cor}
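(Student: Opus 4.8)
The plan is to reduce the statement to the equidimensional situation already handled by Corollary \ref{Cor-SingularidadAislada} (which in turn rests on Theorem \ref{Teo-AnillosConSaltosFinitos}), by first stripping off the components of dimension zero. Write $A=R/I$ with $R=k[x_1,\dots,x_n]$ and $I$ radical. Since $k$ is perfect and $A$ is reduced of finite type over $k$, the ring $A$ is generically smooth over $k$; hence $\Sing(A)$ is a proper closed subset of $\Spec(A)$, and because $\dim A=1$ it is a finite set of closed (maximal) points. This already gives, via Proposition \ref{Prop-UnionSaltos}, that $\Leap_k(A)=\bigcup_{\mm\in\Sing(A)}\Leap_k(A_\mm)$ is a finite union, so it only remains to control each local contribution.

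Next I would separate the minimal primes of $I$ by dimension: put $I_1=\bigcap\{P\in\Min(I):\dim R/P=1\}$ and $I_0=\bigcap\{Q\in\Min(I):\dim R/Q=0\}$, so $I=I_1\cap I_0$. A $0$-dimensional minimal prime $Q$ is maximal, and no other minimal prime can lie inside it (otherwise $Q$ would not be minimal over $I$); hence $\mathbb V(I_1)\cap\mathbb V(I_0)=\emptyset$, i.e.\ $I_1+I_0=R$, and by the Chinese Remainder Theorem $A\cong A_1\times A_0$ with $A_1:=R/I_1$ and $A_0:=R/I_0$. Here $A_1$ is reduced and equidimensional of dimension $1$, i.e.\ of codimension $n-1$ in $R$, while $A_0$ is a reduced $0$-dimensional $k$-algebra of finite type, hence Artinian and a finite product of finite (separable, as $k$ is perfect) field extensions of $k$; in particular $A_0$ is regular, so $\Leap_k(A_0)=\emptyset$ by Proposition \ref{Prop-SaltosRegular}.

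Then I would use Proposition \ref{Prop-UnionSaltos} again. Since $\Sing(A)=\Sing(A_1)$ and $A_\mm=(A_1)_\mm$ for every $\mm\in\Sing(A_1)$, one gets
$$\Leap_k(A)=\bigcup_{\mm\in\Sing(A)}\Leap_k(A_\mm)=\bigcup_{\mm\in\Sing(A_1)}\Leap_k\bigl((A_1)_\mm\bigr)=\Leap_k(A_1),$$
so it suffices to show $\Leap_k(A_1)$ is finite. For this I would apply Corollary \ref{Cor-SingularidadAislada} to $A_1=R/I_1$: it is reduced and equidimensional of codimension $r=n-1$, and since $A_1$ is reduced over the perfect field $k$ the condition $J^{\het}$ holds, so the Jacobian criterion yields $\Sing(A_1)=\mathbb V(J_{n-1}(A_1))$, which by the first paragraph is a finite subset of $\Specmax(A_1)$. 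Thus the hypotheses of Corollary \ref{Cor-SingularidadAislada} are met and $\Leap_k(A_1)$, hence $\Leap_k(A)$, is finite.

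The only genuinely delicate step I foresee is the reduction to the equidimensional case: a reduced affine curve may carry isolated points of dimension zero, and one must check that removing them affects neither the singular locus nor the set of leaps. This is exactly where perfectness of $k$ is used twice — once to force the $0$-dimensional part $A_0$ to be regular (hence leap-free), and once for the Jacobian criterion identifying $\Sing(A_1)$ with $\mathbb V(J_{n-1}(A_1))$ — together with the local-to-global description of leaps in Proposition \ref{Prop-UnionSaltos}. Once $A$ is equidimensional everything is contained in Corollary \ref{Cor-SingularidadAislada}, so no further work is needed.
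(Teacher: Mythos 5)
The paper states Corollary \ref{corolario_curvas} without any written proof, so there is nothing to compare line by line; the evidently intended argument is the one you give, namely: over a perfect field a reduced curve is generically smooth, so $\Sing(A)$ is a finite set of maximal ideals, and one concludes by Proposition \ref{Prop-UnionSaltos} together with Corollary \ref{Cor-SingularidadAislada} (equivalently, Theorem \ref{Teo-AnillosConSaltosFinitos} under hypothesis 2). Your write-up is correct, and the one step you add that the paper silently skips is genuinely needed: Corollary \ref{Cor-SingularidadAislada} assumes $A$ equidimensional, and a reduced $1$-dimensional algebra may have $0$-dimensional components; worse, applying Theorem \ref{Teo-AnillosConSaltosFinitos} directly with $r=\max\{\het(P)\}$ would then force $r=n$ and the wrong Fitting ideal. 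Your CRT splitting $A\cong A_1\times A_0$, with $A_0$ a finite product of separable field extensions (hence regular and leap-free by Proposition \ref{Prop-SaltosRegular}) and $A_1$ equidimensional of codimension $n-1$ satisfying the Jacobian criterion, disposes of this cleanly, and the identification $\Leap_k(A)=\Leap_k(A_1)$ via $\Sing(A)=\Sing(A_1)$ and $A_{\mm}=(A_1)_{\mm}$ is valid. In short: same route as the paper intends, with a necessary gap filled rather than a different method.
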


\begin{teo}\label{Teo-AnillosConSaltosFinitosFormal} Let $k$ be a Noetherian ring containing a field, let $\widetilde{R}=k\lbr x_1,\ldots, x_n\rbr$ and let  $I\subset \widetilde{R}$ be an  ideal. Set  $\widetilde{A}=\widetilde{R}/I$  and let 
  $J_r:=J_r(\widetilde{A})$  be  the    $(n-r)$--Fitting ideal of  $\widetilde{\Omega}_{\widetilde{A}/k}$.  Suppose that the radical of $J_r$ is a maximal ideal ${\mathfrak m}\subset \widetilde{A}$ and that, in addition,  at least one of the following conditions hold: 
	\begin{itemize}
		\item[1)] $I=\langle f_1,\ldots, f_r\rangle$; 
		\item[2)] $k$ is regular, $I$ is radical and  $r=\max\{\het(P) \ | \ P\in \Min(I)\}$. 
	\end{itemize}
	Then the set $\Leap_k(\widetilde{A})$ is finite of cardinal bounded by $d:=\dim_K (\Der_k(A)/{\mathfrak m}^M \Der_k(\widetilde{A}))$ where $K$ is the residue field of   ${\mathfrak m}$,  and $M$ is the smallest positive integer so that  ${\mathfrak m}^M\Der_k(\widetilde{A})\subset \IDer_k(\widetilde{A})$.  
\end{teo}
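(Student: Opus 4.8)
The plan is to reduce Theorem \ref{Teo-AnillosConSaltosFinitosFormal} to the local statement of Theorem \ref{Prop-Saltos finitos} in exactly the same way that Theorem \ref{Teo-AnillosConSaltosFinitos} was reduced to it, only now working over the power series ring $\widetilde{R}=k\lbr x_1,\ldots,x_n\rbr$ instead of the polynomial ring. The key technical input that makes this legitimate is Remark \ref{series_integrabilidad} (together with Remark \ref{series_jacobiano}), which asserts that Theorems \ref{Teo-DerIntInterseccionCompleta} and \ref{Teo-CasoNoEquidimensional} hold verbatim for $\widetilde{A}=\widetilde{R}/I$ under the corresponding hypotheses on $k$ and $\widetilde{A}$, with the Fitting ideals $J_r(\widetilde{A})$ of $\widetilde{\Omega}_{\widetilde{A}/k}$ playing the role of $J_r(A)$.

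First I would invoke whichever of the two hypotheses is in force: under (1) I apply the power series version of Theorem \ref{Teo-DerIntInterseccionCompleta}, and under (2) the power series version of Theorem \ref{Teo-CasoNoEquidimensional}; in either case the conclusion is $J_r\Der_k(\widetilde{A})\subset \IDer_k(\widetilde{A})$. Next I would check that $\widetilde{A}$ is a Noetherian ring finitely presented over $k$ — this holds because $k$ is Noetherian and $\widetilde{R}=k\lbr x_1,\ldots,x_n\rbr$ is Noetherian, and $\widetilde{A}$ is a finitely presented $\widetilde{R}$-module, hence a finitely presented $k$-algebra in the relevant sense — so that Lemma \ref{Cor-SJcDer infinito-integrable1} applies. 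Here the hypothesis that $\sqrt{J_r}=\mm$ is a maximal ideal of $\widetilde{A}$ means $\mm$ is the unique minimal prime of $J_r$, and Lemma \ref{Cor-SJcDer infinito-integrable1} gives $\mm^N\Der_k(\widetilde{A}_\mm)\subset \IDer_k(\widetilde{A}_\mm)$ where $N$ is minimal with $\mm^N\subset J_r\widetilde{A}_\mm$. But $\widetilde{A}$ is already local with maximal ideal $\mm$ (since $\widetilde{R}$ is local and $\mm=\sqrt{J_r}$ is maximal, in fact $\widetilde{A}=\widetilde{A}_\mm$ after noting $\widetilde{A}$ is local — more carefully, $\widetilde{A}$ need not be local a priori, but $\sqrt{J_r}$ maximal forces $J_r$ to be $\mm$-primary and one localizes at $\mm$), so the localization is harmless and we conclude $\mm^M\Der_k(\widetilde{A})\subset \IDer_k(\widetilde{A})$ for a suitable $M\geq 1$.

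Finally, since $k$ contains a field and $\widetilde{A}/\mm^M$ is a local complete ring containing (an isomorphic copy of) the residue field $K=\widetilde{A}/\mm$, and since $\Der_k(\widetilde{A})$ is a finitely generated $\widetilde{A}$-module (being a submodule, or computable from the finite presentation $I/I^2\to \widetilde{\Omega}_{\widetilde{R}/k}/I\widetilde{\Omega}_{\widetilde{R}/k}\to\widetilde{\Omega}_{\widetilde{A}/k}\to 0$ of Remark \ref{series_jacobiano}, noting $\Der_k(\widetilde{A})=\Hom_{\widetilde{A}}(\widetilde{\Omega}_{\widetilde{A}/k},\widetilde{A})$ and $\widetilde{A}$ is Noetherian), the pair $(\widetilde{A},\mm,K)$ satisfies the hypotheses of Theorem \ref{Prop-Saltos finitos}. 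That theorem then yields that $\Leap_k(\widetilde{A})$ is finite with $\#\Leap_k(\widetilde{A})\leq \dim_K(\Der_k(\widetilde{A})/\mm^M\Der_k(\widetilde{A}))=d$, which is the assertion.

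The one genuinely new point compared to the polynomial case, and the step I would be most careful about, is verifying that $\widetilde{A}=\widetilde{R}/I$ is local with maximal ideal $\mm$ and that ``$\sqrt{J_r}=\mm$ maximal'' correctly feeds into Lemma \ref{Cor-SJcDer infinito-integrable1}: one must observe that $\widetilde{R}$ is local with maximal ideal $\langle x_1,\ldots,x_n\rangle$, so every quotient $\widetilde{A}$ is local as well, hence $\mm=\langle x_1,\ldots,x_n\rangle\widetilde{A}$ is automatically \emph{the} maximal ideal, $J_r$ is $\mm$-primary, and $\widetilde{A}_\mm=\widetilde{A}$; everything else is a transcription of the proof of Theorem \ref{Teo-AnillosConSaltosFinitos}. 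A secondary point worth a line is confirming $\Der_k(\widetilde{A})$ is finitely generated, which follows from Noetherianity of $\widetilde{A}$ and the finite presentation of $\widetilde{\Omega}_{\widetilde{A}/k}$ recalled in Remark \ref{series_jacobiano}.
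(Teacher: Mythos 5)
Your overall strategy---invoke Remark \ref{series_integrabilidad} to obtain $J_r\Der_k(\widetilde A)\subset\IDer_k(\widetilde A)$ under either hypothesis, and then feed this into Theorem \ref{Prop-Saltos finitos}---is exactly the paper's proof. However, the middle step, where you route the argument through Lemma \ref{Cor-SJcDer infinito-integrable1} and a localization at $\mm$, is both unjustified as written and unnecessary. It is unjustified for two reasons. First, Lemma \ref{Cor-SJcDer infinito-integrable1} is stated for a \emph{finitely presented} Noetherian $k$-algebra, and $\widetilde A=k\lbr x_1,\ldots, x_n\rbr/I$ is not of finite type over $k$, so the lemma does not apply to it; calling $\widetilde A$ ``finitely presented in the relevant sense'' because it is a finitely presented $\widetilde R$-module does not repair this. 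Second, your claim that $\widetilde R$ is local with maximal ideal $\langle x_1,\ldots,x_n\rangle$ (hence that $\widetilde A$ is local and $\widetilde A_\mm=\widetilde A$) is false unless $k$ itself is local: the units of $k\lbr x_1,\ldots,x_n\rbr$ are the series with unit constant term, so its maximal ideals are in bijection with those of $k$, and the hypothesis is only that $k$ is Noetherian and contains a field. Without $\widetilde A=\widetilde A_\mm$, your argument only controls $\Leap_k(\widetilde A_\mm)$, whereas the theorem is about $\Leap_k(\widetilde A)$.

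The detour is unnecessary because the hypothesis $\sqrt{J_r}=\mm$ already does the work globally: since $\widetilde A$ is Noetherian, $\mm^N\subset J_r$ for some $N\geq 1$, and hence $\mm^N\Der_k(\widetilde A)\subset J_r\Der_k(\widetilde A)\subset\IDer_k(\widetilde A)$ with no localization at all. This is precisely what the paper does, and it places $(\widetilde A,\mm)$ directly under the hypotheses of Theorem \ref{Prop-Saltos finitos}. Your closing observation that $\Der_k(\widetilde A)=\Hom_{\widetilde A}(\widetilde\Omega_{\widetilde A/k},\widetilde A)$ is a finitely generated $\widetilde A$-module (so that the bound $d$ is actually finite) is a worthwhile point that the paper leaves implicit.
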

\begin{proof} As indicated in Remark \ref{series_integrabilidad},   Theorems  \ref{Teo-DerIntInterseccionCompleta} and  \ref{Teo-CasoNoEquidimensional} are also valid for $\widetilde{A}$ and hence    $J_r\Der_k(\widetilde A)\subset \IDer_k(\widetilde A)$. Since  $A$ is Noetherian,  ${\mathfrak m}^N\Der_k(\widetilde A)\subset \IDer_k(\widetilde A)$,  where  $N$ is the smallest positive integer for which    ${\mathfrak m}^N\subset J_r$.  Now use Theorem \ref{Prop-Saltos finitos}. \end{proof}

\

%
%


\end{document}